\newtheorem{theorem}{Theorem}[section]
\newtheorem{lemma}[theorem]{Lemma}
\newtheorem{Prop}[theorem]{Proposition}
\newtheorem{Qu}[theorem]{Question}
\theoremstyle{definition}
\newtheorem{example}[theorem]{Example}
\theoremstyle{remark}
\newtheorem{remark}[theorem]{Remark}
\numberwithin{equation}{section}
\newcommand\pf{\begin{proof}}
\newcommand\epf{\end{proof}}
\newcommand\Alg{\operatorname{Alg}}
\newcommand\CGal{\operatorname{CGal}}
\newcommand\Frac{\operatorname{Frac}}
\newcommand\Ker{\operatorname{Ker}}
\newcommand\Forms{\operatorname{Forms}}
\newcommand\id{{\operatorname{id}}}
\renewcommand\AA{\mathcal A}
\newcommand\BB{\mathcal B}
\newcommand\UU{\mathcal U}
\newcommand\ZZ{\mathbb Z}
\newcommand\mm{\mathfrak{m}}
\newcommand\gs{\mathfrak{s}}
\newcommand\gl{\mathfrak{l}}
\newcommand\eps{\varepsilon}
\newcommand\sw[1]{{}_{(#1)}}
\begin{document}

\title{Generic Hopf Galois extensions}

\author{Christian Kassel}
\address{Universit\'{e} de Strasbourg \& CNRS\\
Institut de Recherche Math\'{e}matique Avanc\'{e}e\\
7 rue Ren\'{e} Descartes\\
67084 Strasbourg, France}
\email{kassel@math.u-strasbg.fr}
\urladdr{www-irma.u-strasbg.fr/\raise-2pt\hbox{\~{}}kassel/}
\thanks{Partially funded by ANR Project BLAN07-3$_-$183390}

\keywords{Hopf algebra, Galois extension, twisted product, generic, cocycle, integrality}

\subjclass[2000]{Primary (16W30, 16S35, 16R50) 
Secondary (13B05, 13B22, 16E99, 58B32, 58B34, 81R50)}

\begin{abstract}
In previous joint work with Eli Aljadeff we attached a generic Hopf Galois extension~$\AA_H^{\alpha}$
to each twisted algebra~${}^{\alpha} H$ obtained from a Hopf algebra~$H$
by twisting its product with the help of a cocycle~$\alpha$.
The algebra~$\AA_H^{\alpha}$ is a flat deformation of~${}^{\alpha} H$
over a ``big" central subalgebra~$\BB_H^{\alpha}$
and can be viewed as the noncommutative analogue of a versal torsor in the sense of Serre.
After surveying the results on~$\AA_H^{\alpha}$ obtained with Aljadeff,
we establish three new results:
we present a systematic method to construct elements of  
the commutative algebra~$\BB_H^{\alpha}$,
we show that a certain important integrality condition 
is satisfied by all finite-dimensional
Hopf algebras generated by grouplike and skew-primitive elements,
and we compute~$\BB_H^{\alpha}$ in the case where $H$
is the Hopf algebra of a cyclic group.
\end{abstract}

\maketitle

\section*{Introduction}

In this paper we deal with associative algebras~${}^{\alpha} H$
obtained from a Hopf algebra~$H$
by twisting its product by a cocycle~$\alpha$. 
This class of algebras, which for simplicity we call \emph{twisted algebras},
coincides with the class of so-called 
\emph{cleft Hopf Galois extensions} of the ground field; 
classical Galois extensions and strongly group-graded algebras belong to this class. 
As has been stressed many times (see, e.g.,~\cite{S}), Hopf Galois
extensions can be viewed as noncommutative analogues of 
principal fiber bundles (also known as $G$-torsors), for which
the r\^ole of the structural group is played by a Hopf algebra. 
Hopf Galois extensions abound in the world of quantum groups
and of noncommutative geometry.
The problem of constructing systematically Hopf Galois extensions of a given algebra 
for a given Hopf algebra and of classifying them up to isomorphism
has been addressed in a number of papers over the last fifteen years;
let us mention \cite{Au, Bi, DT2, Gu, K, KS, Ma1, Ma11, Ma2, PvO, Sb1, Sb2}.
This list is far from being exhaustive, but gives a pretty good idea of the activity
on this subject.

A new approach to this problem was recently considered in~\cite{AK};
this approach mixes commutative algebra
with techniques from noncommutative algebra
such as \emph{polynomial identities}.
In particular, in that paper Eli Aljadeff and the author
attached two ``universal algebras" 
$\UU_H^{\alpha}$, $\AA_H^{\alpha}$ 
to each twisted algebra~${}^{\alpha} H$.
The algebra~$\UU_H^{\alpha}$, which was built out of
polynomial identities satisfied by~${}^{\alpha} H$,
was the starting point of~\emph{loc.\ cit}.
In the present paper we concentrate on the second algebra~$\AA_H^{\alpha}$ 
and survey the results obtained in~\cite{AK} from the point of view of this algebra.
In addition, we present here two new results, namely
Theorem~\ref{integralthm} and Proposition~\ref{identity0},
as well as a computation in Subsection~\ref{ZZZ}.

The algebra $\AA_H^{\alpha}$ is a ``generic" version of~${}^{\alpha} H$
and can be seen as a kind of universal Hopf Galois extension.
To construct~$\AA_H^{\alpha}$ 
we introduce the \emph{generic cocycle} cohomologous 
to the original cocycle~$\alpha$
and we consider
the commutative algebra~$\BB_H^{\alpha}$ generated by the values of
the generic cocycle and of its convolution inverse.
Then $\AA_H^{\alpha}$ is a cleft $H$-Galois extension of~$\BB_H^{\alpha}$.
We call $\AA_H^{\alpha}$ the \emph{generic Galois extension}
and $\BB_H^{\alpha}$ the \emph{generic base space}.
They satisfy the following remarkable properties.

Any ``form'' of~${}^{\alpha} H$ is obtained from~$\AA_H^{\alpha}$
by a specialization of~$\BB_H^{\alpha}$. 
Conversely, under an additional integrality condition,
any central specialization of~$\AA_H^{\alpha}$
is a form of~${}^{\alpha} H$.
Thus, the set of algebra morphisms~$\Alg(\BB_H^{\alpha},K)$
parametrizes the isomorphism classes of $K$-forms of~${}^{\alpha} H$
and $\AA_H^{\alpha}$ can be viewed as the noncommutative analogue 
of a \emph{versal deformation space} 
or a \emph{versal torsor} in the sense of Serre
(see~\cite[Chap.~I]{GMS}). 
We believe that such versal deformation spaces are of interest and deserve
to be computed for many Hopf Galois extensions.
Even when the Hopf algebra $H$ is a group algebra,
in which case our theory simplifies drastically, 
not many examples have been computed
(see \cite{AHN, AN} for results in this case).

Our approach also leads to the emergence of new interesting
questions on Hopf algebras such as Question~\ref{Question1} below.
We give a positive answer to this question for a class of
Hopf algebras that includes the finite-dimensional ones 
that are generated by grouplike and skew-primitive elements.

Finally we present a new systematic way
to construct elements of the generic base space~$\BB_H^{\alpha}$.
These elements are the images of certain universal noncommutative polynomials
under a certain tautological map. In the language
of polynomial identities, these noncommutative polynomials are central identities.

The paper is organized as follows. 
In Section~\ref{principal} we recall the concept of a Hopf Galois extension
and discuss the classification problem for such extensions.
In Section~\ref{twisted} we define Hopf algebra cocycles
and the twisted algebras~${}^{\alpha} H$. 
We construct the generic cocycle and
the generic base space~$\BB_H^{\alpha}$ in Section~\ref{generic cocycle};
we also compute~$\BB_H^{\alpha}$ when $H$ is the Hopf algebra of a cyclic group.
In Section~\ref{Sweedler}
we illustrate the theory with a nontrivial, still not too complicated example,
namely with the four-dimen\-sion\-al Sweedler algebra.
In Section~\ref{generic algebra} we define the 
generic Hopf Galois extension~$\AA_H^{\alpha}$ and 
state its most important properties. 
Some results of Section~\ref{generic algebra} hold under
a certain integrality condition; 
in Section~\ref{integrality} we prove that this condition is satisfied by
a certain class of Hopf algebras.
In Section~\ref{identities} we present the above-mentioned general method 
to construct elements of~$\BB_H^{\alpha}$.
The contents of Subsection~\ref{ZZZ} and
of Sections~\ref{integrality} and~\ref{identities} are new.

We consistently work over a fixed field~$k$, over which all our constructions 
will be defined. 
As usual, unadorned tensor symbols refer to the tensor product
of $k$-vector spaces. 
All algebras are assumed to be associative and unital,
and all algebra morphisms preserve the units.
We denote the unit of an algebra~$A$ by~$1_A$, 
or simply by~$1$ if the context is clear.
The set of algebra morphisms from an algebra~$A$ to an algebra~$B$
will be denoted by~$\Alg(A,B)$.

\pagebreak[2]

\section{Principal fiber bundles and Hopf Galois extensions}\label{principal}

\subsection{Hopf Galois extensions}\label{HopfGalois}
A principal fiber bundle involves a group~$G$ acting, say on the right, 
on a space~$X$ such that the map 
\[
X\times G \to X\times_Y X\, ;\; (x,g) \mapsto (x,xg)
\] 
is an isomorphism (in the category of spaces under consideration).
Here $Y$ represents some version of the quotient space~$X/G$
and $X\times_Y X$ the fiber product.

In a purely algebraic setting, the group $G$~is replaced by a Hopf algebra~$H$ 
with coproduct $\Delta : H\to H \otimes H$, co\"unit $\eps : H \to k$, 
and antipode $S : H \to H$.
In the sequel we shall make use of the Heyneman-Sweedler sigma notation
(see~\cite[Sect.~1.2]{Sw}): 
we write
\[
\Delta(x) = \sum_{(x)} x\sw1 \otimes x\sw2
\]
for the coproduct of $x \in H$ and
\[
\Delta^{(2)}(x) = \sum_{(x)} x\sw1 \otimes x\sw2 \otimes x_{(3)}
\]
for the iterated coproduct
$\Delta^{(2)} = (\Delta \otimes \id_H) \circ \Delta = (\id_H \otimes \Delta) \circ \Delta$,
and so on.

The $G$-space $X$ is replaced by an algebra~$A$ carrying the structure
of an $H$-comodule algebra. 
Recall that an algebra~$A$ is an $H$-\emph{comodule algebra}
if it has a right $H$-comodule structure whose coaction
$\delta : A \to A\otimes  H$ is an algebra morphism.

The space of \emph{co\"invariants} of an $H$-comodule algebra~$A$
is the subspace~$A^H$ of~$A$ defined by
\[
A^H = \{ a \in A \, | \, \delta(a)  = a \otimes 1\} \, .
\]
The subspace $A^H$ is a subalgebra and a subcomodule of~$A$.
We then say that $A^H\subset A$ is an $H$-\emph{extension}
or that $A$ is an $H$-extension of~$A^H$.
An $H$-extension is called \emph{central}
if $A^H$ lies in the center of~$A$.

An $H$-extension $B = A^H \subset A$ is said to be $H$-\emph{Galois}
if $A$ is faithfully flat as a left $B$-module and
the linear map $\beta : A\otimes_B A \to A\otimes H$
defined for $a$, $b\in A$ by
\[
a\otimes b \mapsto  (a\otimes 1_H) \, \delta(b)
\]
is bijective.
For a survey of Hopf Galois extensions, see \cite[Chap.~8]{M2}.

\begin{example}\label{graded-algebra}
The group algebra $H = k[G]$ of a group~$G$ is a Hopf algebra
with coproduct, co\"unit, and antipode respectively
given for all $g\in G$ by
\[
\Delta(g) = g\otimes g \, , \quad\eps(g) = 1\, , \quad S(g) = g^{-1}\, .
\]
This is a pointed Hopf algebra.
It is well known (see~\cite[Lemma~4.8]{BM}) that 
an $H$-comodule algebra~$A$ is the same as a $G$-graded algebra
\[
A = \bigoplus_{g\in G}\, A_g \, .
\]
The coaction $\delta : A \to A \otimes H$ is given by
$\delta(a) = a\otimes g$ for $a\in A_g$ and~$g\in G$.
We have $A^H = A_e$, where $e$ is the neutral element of~$G$.
Such a algebra is an $H$-Galois extension of~$A_e$ if and only if
the product induces isomorphisms 
\[
A_g \otimes_{A_e} A_h \cong A_{gh} \qquad(g,h\in G) \, .
\]
\end{example}

\subsection{(Uni)versal extensions}\label{classification}

An isomorphism $f: A \to A'$ of $H$-Galois extensions is
an isomorphism of the underlying $H$-comodule algebras,
i.e., an algebra morphism satisfying 
\[
\delta \circ f = (f\otimes \id_H) \circ \delta \, .
\]
Such an isomorphism necessarily sends $A^H$ onto~$A'{}^H$.

For any Hopf algebra~$H$ and any commutative algebra~$B$,
let $\CGal_H(B)$ denote the set of isomorphism classes of 
central $H$-Galois extensions of~$B$. 
It was shown in~\cite[Th.~1.4]{K} (see also~\cite[Prop.~1.2]{KS})
that any morphism $f: B\to B'$ of commutative algebras
induces a functorial map 
\[
f_* : \CGal_H(B) \to \CGal_H(B')
\]
given by $f_*(A) = B' \otimes_B A$
for all $H$-Galois extensions $A$ of~$B$.
The set $\CGal_H(B)$ is also a contravariant functor in~$H$
(see~\cite[Prop.~1.3]{KS}), but we will not make use of this fact here.

Recall that principal fiber bundles are classified as follows: there is a 
principal $G$-bundle $EG \to BG$, called the universal $G$-bundle
such that any principal $G$-bundle $X \to Y$ is obtained from pulling back 
the universal one along a continuous
map $f: Y \to BG$, which is unique up to homotopy.

By analogy, 
a \emph{(uni)versal $H$-Galois extension} would be a 
central $H$-Galois extension~$\BB_H \subset \AA_H$ such that for any
commutative algebra~$B$ and any central $H$-Galois extension $A$ of~$B$
there is a (unique) morphism of algebras $f: \BB_H \to B$ such that
$f_*(\AA_H) \cong A$. In other words, the map
\[
\Alg(\BB_H,B) \to \CGal_H(B)\, ; \, f \mapsto f_*(\AA_H)
\]
would be surjective (bijective). 
We have no idea if such (uni)versal $H$-Galois extensions
exist for general Hopf algebras. 

In the sequel, we shall only consider the case where $B=k$ and 
the $H$-Galois extensions of~$k$ are \emph{cleft}. Such
extensions coincide with the twisted algebras~${}^{\alpha} H$
introduced in the next section. To such an $H$-Galois extension
we shall associate a central $H$-Galois 
extension $\BB_H^{\alpha} \subset \AA_H^{\alpha}$,
such that the functor $\Alg(\BB_H^{\alpha},-)$ parametrizes 
the ``forms" of~${}^{\alpha} H$.
In this way we obtain an $H$-Galois extension that
is versal for a family of $H$-Galois extensions 
close to~${}^{\alpha} H$ in some appropriate \'etale-like Grothendieck topology.

\section{Twisted algebras}\label{twisted}

The definition of the twisted algebras~${}^{\alpha} H$
uses the concept of a cocycle, which we now recall.

\subsection{Cocycles}\label{cocycles}

Let $H$ be a Hopf algebra and $B$ a commutative algebra.
We use the following terminology.
A bilinear map $\alpha : H \times H \to B$ is
a \emph{cocycle of~$H$ with values in}~$B$ if
\begin{equation*}
\sum_{(x), (y)}\alpha(x\sw1,y\sw1)\, \alpha(x\sw2y\sw2,z)
= \sum_{(y), (z)} \alpha(y\sw1, z\sw1)\, \alpha(x,y\sw2z\sw2)
\end{equation*}
for all $x,y,z \in H$.
In the literature, what we call a cocycle is often
referred to as a ``left $2$-cocycle."

A bilinear map $\alpha : H \times H \to B$ is said
to be \emph{normalized} if
\begin{equation}\label{normalized}
\alpha(x,1_H)  = \alpha(1_H,x) = \varepsilon(x)\, 1_B
\end{equation}
for all $x\in H$. 

Two cocycles $\alpha, \beta : H \times H \to B$ are said
to be \emph{cohomologous} if 
there is an invertible linear map $\lambda : H \to B$ 
such that
\begin{equation}\label{cohomologous}
\beta(x,y) = \sum_{(x), (y)} \, 
\lambda(x_{(1)}) \, \lambda(y_{(1)}) \, \alpha(x_{(2)},y_{(2)}) \, 
\lambda^{-1}(x_{(3)}y_{(3)})
\end{equation}
for all $x, y \in H$.
Here ``invertible" means invertible with respect to the convolution product
and $\lambda^{-1} : H \to B$ denotes the inverse of~$\lambda$.
We write $\alpha \sim \beta$ if $\alpha, \beta$
are cohomologous cocycles.
The relation~$\sim$ is an equivalence relation on the set
of cocycles of~$H$ with values in~$B$.

\subsection{Twisted product}\label{Twisted products}

Let $H$ be a Hopf algebra, $B$ a commutative algebra,
and $\alpha : H \times H \to B$ be a normalized cocycle
with values in~$B$.
From now on, all cocycles are assumed to be
invertible with respect to the convolution product.

Let $u_H$ be a copy of the underlying vector space of~$H$.
Denote the identity map from $H$ to~$u_H$ by $x \mapsto u_x$ ($x\in H$).

We define the \emph{twisted algebra} $B\otimes {}^{\alpha} H$
as the vector space~$B\otimes u_H$ equipped 
with the associative product given by
\begin{equation}\label{twisted-multiplication}
(b\otimes u_x)  (c\otimes u_y)
= \sum_{(x), (y)}\, bc \, \alpha(x\sw1, y\sw1) \otimes  u_{x\sw2 y\sw2}
\end{equation}
for all $b$, $c\in B$ and $x$, $y \in H$.
Since $\alpha$ is a normalized cocycle,
$1_B \otimes u_1$ is the unit of~$B\otimes {}^{\alpha} H$.

The algebra $A = B\otimes {}^{\alpha} H$ is an $H$-comodule algebra
with coaction 
\[
\delta = \id_B \otimes \Delta : 
A = B\otimes H \to B\otimes H \otimes H = A \otimes H\, .
\] 
The subalgebra of co\"invariants of~$B\otimes {}^{\alpha} H$
coincides with~$B\otimes u_1$. 
Using~\eqref{normalized} and~\eqref{twisted-multiplication},
it is easy to check that this subalgebra lies in 
the center of~$B\otimes {}^{\alpha} H$.

It is well known that each twisted algebra~$B\otimes {}^{\alpha} H$ 
is a central $H$-Galois extension of~$B$.
Actually, the class of twisted algebra coincides with the 
class of so-called central cleft $H$-Galois extensions;
see \cite{BCM}, \cite{DT}, \cite[Prop.~7.2.3]{M2}.

An important special case of this construction occurs 
when $B = k$ is the ground field
and $\alpha : H \times H \to k$ is a cocycle of~$H$ with values in~$k$.
In this case, we simply call $\alpha$ a cocycle of~$H$.
Then the twisted algebra $k\otimes {}^{\alpha} H$,
which we henceforth denote by~${}^{\alpha} H$, coincides
with $u_H$ equipped with the associative product
\begin{equation*}
u_x \, u_y
= \sum_{(x), (y)}\,\alpha(x\sw1, y\sw1) \,  u_{x\sw2 y\sw2}
\end{equation*}
for all $x$, $y \in H$. 
The twisted algebras of the form~${}^{\alpha} H$
coincide with the so-called \emph{cleft $H$-Galois objects},
which are the cleft $H$-Galois extensions of the ground field~$k$.
We point out that for certain Hopf algebras~$H$ all $H$-Galois objects are cleft, e.g., if
$H$ is finite-dimensional or is a pointed Hopf algebra.

When $H = k[G]$ is the Hopf algebra of a group 
as in Example~\ref{graded-algebra}, then a $G$-graded algebra
$A = \bigoplus_{g\in G}\, A_g$ is an $H$-Galois object if and only if
$A_g A_h = A_{gh}$ for all $g,h\in G$ and $\dim A_g = 1$ for all $g\in G$.
Such an $H$-extension is cleft and thus isomorphic to~${}^{\alpha} H$
for some normalized invertible cocycle~$\alpha$.

\subsection{Isomorphisms of twisted algebras}\label{Isomorphisms}

By~\cite{BCM, Doi} there is
an isomor\-phism of $H$-comodule algebras between
the twisted algebras ${}^{\alpha}H$ and ${}^{\beta}H$ 
if and only if the cocycles $\alpha$ and $\beta$ are cohomologous
in the sense of~\eqref{cohomologous}.
It follows that the set of isomorphism classes of cleft $H$-Galois objects
is in bijection with the set of cohomology classes
of invertible cocycles of~$H$.

When the Hopf algebra~$H$ is cocommutative, then
the convolution product of two cocycles is a cocycle
and the set of cohomology classes of invertible cocycles of~$H$ is a group. 
This applies to the case $H = k[G]$; 
in this case the group of cohomology classes of invertible cocycles of~$k[G]$
is isomorphic to the cohomology group
$H^2(G,k^{\times})$ of the group~$G$ 
with values in the group~$k^{\times}$ of invertible elements of~$k$.

In general, the convolution product of two cocycles is \emph{not} a cocycle
and thus the set of cohomology classes of invertible cocycles is not a group.
One of the \emph{raisons d'\^etre} of the constructions presented here
and in~\cite{AK} lies in the lack of a suitable cohomology group
governing the situation.
We come up instead with the generic Galois extension defined below.

\section{The generic cocycle}\label{generic cocycle}

Let $H$ be a Hopf algebra and $\alpha : H\times H \to k$
an invertible normalized cocycle.

\subsection{The cocycle $\sigma$}

Our first aim is to construct a ``generic" cocycle of~$H$
that is cohomologous to~$\alpha$.

We start from the equation~\eqref{cohomologous}
\begin{equation*}
\beta(x,y) = \sum_{(x), (y)} \, 
\lambda(x_{(1)}) \, \lambda(y_{(1)}) \, \alpha(x_{(2)},y_{(2)}) \, 
\lambda^{-1}(x_{(3)}y_{(3)})
\end{equation*}
expressing that a cocycle $\beta$ is cohomologous to~$\alpha$, 
and the equation
\begin{equation}\label{lambda}
\sum_{(x)} \, \lambda(x_{(1)}) \, \lambda^{-1}(x_{(2)}) 
= \sum_{(x)} \, \lambda^{-1}(x_{(1)}) \, \lambda(x_{(2)})
= \eps(x) \, 1
\end{equation}
expressing that the linear form~$\lambda$ is invertible
with inverse~$\lambda^{-1}$.
To obtain the generic cocycle,
we proceed to mimic~\eqref{cohomologous}, replacing
the scalars $\lambda(x), \lambda^{-1}(x)$ respectively
by symbols~$t_x, t^{-1}_x$ satisfying~\eqref{lambda}.

Let us give a meaning to the symbols $t_x, t^{-1}_x$.
To this end we pick another copy $t_H$ of the underlying vector space of~$H$
and denote the identity map from $H$ to~$t_H$ by $x\mapsto t_x$ ($x\in H$).

Let $S(t_H)$ be the symmetric algebra over the vector space~$t_H$.
If $\{x_i\}_{i\in I}$ is a basis of~$H$, then $S(t_H)$ is
isomorphic to the polynomial algebra over the indeterminates $\{t_{x_i}\}_{i\in I}$.

By~\cite[Lemma~A.1]{AK}
there is a unique linear map $x \mapsto t^{-1}_x$
from~$H$ to the field of fractions~$\Frac S(t_H)$ of~$S(t_H)$
such that for all $x\in H$,
\begin{equation}\label{tt}
\sum_{(x)}\, t_{x_{(1)}} \, t^{-1}_{x_{(2)}}
= \sum_{(x)}\, t^{-1}_{x_{(1)}} \,  t_{x_{(2)}} = \eps(x) \, 1 \, .
\end{equation}
Equation~\eqref{tt} is the symbolic counterpart of~\eqref{lambda}.

Mimicking~\eqref{cohomologous}, we define a bilinear map
\[
\sigma : H \times H \to \Frac S(t_H)
\]
with values in the field of fractions $\Frac S(t_H)$ by the formula
\begin{equation}\label{sigma-def}
\sigma(x,y) = \sum_{(x), (y)}\, t_{x_{(1)}} \, t_{y_{(1)}} \,
\alpha(x_{(2)},y_{(2)}) \, t^{-1}_{x_{(3)} y_{(3)}}
\end{equation}
for all $x,y \in H$. 
The bilinear map $\sigma$ is a cocycle of~$H$ with values in~$\Frac S(t_H)$;
by definition, it is cohomologous to~$\alpha$.
We call $\sigma$ the \emph{generic cocycle attached to}~$\alpha$.

The cocycle $\alpha$ being invertible, so is~$\sigma$,
with inverse $\sigma^{-1}$ given for all $x,y \in H$ by
\begin{equation}\label{sigma^{-1}-def}
\sigma^{-1}(x,y) = \sum_{(x), (y)}\, t_{x_{(1)} y_{(1)}} \, 
\alpha^{-1}(x_{(2)},y_{(2)}) \, t^{-1}_{x_{(3)}}  \, t^{-1}_{y_{(3)}}\, ,
\end{equation}
where $\alpha^{-1}$ is the inverse of~$\alpha$.

In the case where $H = k[G]$ is the Hopf algebra of a group, the generic cocycle
and its inverse have the following simple expressions:
\begin{equation}\label{sigma-group}
\sigma(g,h) = \alpha(g,h)\, \frac{t_g \, t_h}{t_{gh}}
\quad\text{and}\quad
\sigma^{-1}(g,h) = \frac{1}{\alpha(g,h)}\, \frac{t_{gh}}{t_g \, t_h}
\end{equation}
for all $g$, $h \in G$.

\subsection{The generic base space}\label{algebraB}

Let $\BB_H^{\alpha}$ be the subalgebra of $\Frac S(t_H)$ 
generated by the values of the generic cocycle~$\sigma$ and 
of its inverse~$\sigma^{-1}$.
For reasons that will become clear in Section~\ref{generic algebra},
we call $\BB_H^{\alpha}$ the \emph{generic base space}.

Since $\BB_H^{\alpha}$ is a subalgebra of the field~$\Frac S(t_H)$, 
it is a domain and
the transcendence degree of the field of fractions of~$\BB_H^{\alpha}$
cannot exceed the dimension of~$H$.

In the case where $H$ is finite-dimensional,
$\BB_H^{\alpha}$ is a finitely generated algebra.
One can obtain a presentation of~$\BB_H^{\alpha}$ by generators and relations 
using standard monomial order techniques of commutative algebra.

\subsection{A computation}\label{ZZZ}

Let $H = k[\ZZ]$ be the Hopf algebra of the group~$\ZZ$ of integers.
We write~$\ZZ$ multiplicatively and identify its elements with the powers
$x^m$ of a variable~$x$ ($m \in \ZZ$).

We take $\alpha$ to be the trivial cocycle, i.e., $\alpha(g,h) = 1$ for all $g,h \in\ZZ$
(this is no restriction since $H^2(\ZZ,k^{\times}) = 0$).
In this case the symmetric algebra $S(t_H)$ coincides with the
polynomial algebra $k[t_m \, |\, m\in \ZZ]$.
Set $y_m = t_m/t_1^m$ for each $m\in \ZZ$. 
We have $y_1 = 1$ and $y_0 = t_0$.

By \eqref{sigma-group}, the generic cocycle is given by
\[
\sigma(x^m, x^n) = \frac{t_m t_n}{t_{m+n}} 
\]
for all $m,n \in \ZZ$. 
This can be reformulated as
\[
\sigma(x^m, x^n) = \frac{y_m y_n}{y_{m+n}} \, .
\]
The inverse of~$\sigma$ is given by
\[
\sigma^{-1}(x^m, x^n) = \frac{1}{\sigma(x^m, x^n)} 
=  \frac{y_{m+n}}{y_m y_n} \, .
\]

A simple computation yields the following expressions of~$y_m$
in the values of~$\sigma$ and~$\sigma^{-1}$:
\begin{equation*}
y_m = 
\begin{cases}
\sigma^{-1}(x^{m-1}, x) \, \sigma^{-1}(x^{m-2}, x) \cdots \sigma^{-1}(x, x) 
& \text{if $m \geq 2$} \, ,\\
1 & \text{if $m=1$} \, ,\\
\sigma(x^0,x^0) & \text{if $m=0$} \, ,\\
\sigma(x^m,x^{-m}) \sigma(x^{-m-1}, x) \sigma(x^{-m-2}, x) \cdots \sigma(x, x)\sigma(x^0,x^0)
& \text{if $m\leq -1$} \, .
\end{cases}
\end{equation*}
It follows that the elements $y_m^{\pm 1}$ belong to~$\BB_H^{\alpha}$
for all~$m\in \ZZ - \{1\}$ and generate this algebra.
It is easy to check that the family $(y_m)_{m\neq 1}$ is
algebraically independent, so that
$\BB_H^{\alpha}$ is the Laurent polynomial algebra
\[
\BB_H^{\alpha} = k[\, y_m^{\pm 1} \, |\, m\in \ZZ - \{1\} \, ] \, .
\]
We deduce the algebra isomorphism
\begin{equation}\label{B-Z}
k[t_m^{\pm 1} \, |\, m\in \ZZ] \cong \BB_H^{\alpha}[t_1^{\pm 1}]  \, .
\end{equation}

If in the previous computations
we replace $\ZZ$ by the cyclic group $\ZZ/N$, where $N$ is some
integer $N \geq 2$, then
the algebra $\BB_H^{\alpha}$ is again a Laurent polynomial algebra:
\[
\BB_H^{\alpha} = k[y_0^{\pm 1}, y_2^{\pm 1}, \ldots, y_N^{\pm 1}]\, 
\]
where $y_0, y_2, \ldots, y_{N-1}$ are defined as above
and $y_N = t_0/t_1^N$.
In this case, the algebra
$k[t_0^{\pm 1}, t_1^{\pm 1}, \ldots, t_{N-1}^{\pm 1}]$
is an integral extension of~$\BB_H^{\alpha}$:
\[
k[t_0^{\pm 1}, t_1^{\pm 1}, \ldots, t_{N-1}^{\pm 1}]
\cong \BB_H^{\alpha}[t_1]/(t_1^N - y_0/y_N)  \, .
\]

\section{The Sweedler algebra}\label{Sweedler}

We now illustrate the constructions of Section~\ref{generic cocycle}
on Sweedler's four-dimen\-sion\-al Hopf algebra.
We assume in this section that the characteristic of the ground
field~$k$ is different from~2.

The \emph{Sweedler algebra} $H_4$ is the
algebra generated by two elements $x$, $y$ subject to the relations
\begin{equation*}
x^2 = 1  \, , \quad xy + yx = 0 \, , \quad y^2 = 0 \, .
\end{equation*}
It is four-dimensional. As a basis of~$H_4$ 
we take the set $\{1, x, y, z\}$, where $z = xy$.

The algebra~$H_4$ carries the structure of a Hopf algebra with
coproduct, co\"unit, and antipode given by
\[
\begin{array}{rclcrcl}
\Delta(1) & = & 1 \otimes 1 \, ,  &\; &  \Delta(x) & = &  x \otimes x \, ,\\  
\Delta(y) & = & 1 \otimes y + y \otimes x \, ,  & \; &   \Delta(z) & = &  x \otimes z + z \otimes 1 \, ,\\  
\eps(1) & = & \eps(x) =   1 \, ,  & \; &   \eps(y) & = &   \eps(z) = 0 \, ,\\  
S(1) & = & 1 \, ,  &\; &  S(x) & = &  x  \, ,\\
S(y) & = & z \, ,  &\; & S(z) & = &  -y  \, .
\end{array}
\]

By definition, the symbols $t_x$ and $t^{-1}_x$ satisfy the equations
\[
\begin{array}{rclcrcl}
t_1 \, t^{-1}_1 & = & 1 \, ,  &\; &  t_x \, t^{-1}_x & = &  1 \, ,\\  
t_1 \, t^{-1}_y + t_y \, t^{-1}_x & = & 0 \, ,  & \; &   
t_x \, t^{-1}_z + t_z \, t^{-1}_1 & = &  0 \, .
\end{array}
\]
Hence,
\begin{equation*}
t^{-1}_1 = \frac{1}{t_1} \, ,\quad  
t^{-1}_x = \frac{1}{t_x} \, ,\quad  
t^{-1}_y =  - \frac{t_y}{t_1t_x}\ , \quad  
t^{-1}_z = - \frac{t_z} {t_1 t_x}  \, .
\end{equation*}

Masuoka~\cite{Ma1} showed that any cleft 
$H_4$-Galois object has, up to isomorphism, the following presentation:
\[
{}^{\alpha}H_4
= k \langle \, u_x, u_y\, |\, u_x^2 = a\, , \;\; 
u_x u_y + u_y u_x = b \, , \;\; u_y^2 = c \, \rangle
\]
for some scalars $a$, $b$, $c$ with $a\neq 0$.
To indicate the dependence on the parameters $a$, $b$~$c$, we
denote ${}^{\alpha} H_4$ by~$A_{a,b,c}$. 

It is easy to check that the center of~$A_{a,b,c}$ is trivial for all
values of $a$, $b$,~$c$.
Moreover, the algebra $A_{a,b,c}$ is simple if and only $b^2-4ac \neq 0$.
If $b^2-4ac =0$, then $A_{a,b,c}$ is isomorphic as an algebra to~$H_4$;
the latter is not semisimple 
since the two-sided ideal generated by~$y$ is nilpotent.

The generic cocycle $\sigma$ attached to~$\alpha$ has the following values:
\begin{eqnarray*}
\sigma(1,1) & = & \sigma(1,x) = \sigma(x,1) = t_1\, ,\\
\sigma(1,y) & = & \sigma(y,1) = \sigma(1,z) = \sigma(z,1) = 0 \, ,\\
\sigma(x,x) & = & at_x^2 t_1^{-1}, \\
\sigma(y,y) & = & \sigma(z,y) = - \sigma(y,z)
= (a t_y^2 + b t_1 t_y + c t_1^2) \, t_1^{-1},\\
\sigma(x,y) & = &  - \sigma(x,z) = (a t_x t_y - t_1t_z ) \, t_1^{-1}, \\
\sigma(y,x) & = & \sigma(z,x) =  (bt_1t_x + a t_x t_y + t_1t_z) \, t_1^{-1}, \\
\sigma(z,z) & = & -(t_z^2 + b t_x t_z + ac t_x^2) \, t_1^{-1},
\end{eqnarray*}
The values of the inverse $\sigma^{-1}$ are equal to the values of~$\sigma$
possibly divided by positive powers of~$t_1$ 
and of~$\sigma(x,x) = at_x^2 t_1^{-1}$.

By definition, $\BB_{H_4}^{\alpha}$ is
the subalgebra of~$\Frac S(t_{H_4})$ generated
by the values of $\sigma$ and~$\sigma^{-1}$.
If we set
\begin{equation}\label{ERSTU}
\begin{aligned}
& E = t_1 \, , \quad  R = a \, t_x^2 \, , \quad  S = a \, t_y^2 + b \, t_1t_y + c \, t_1^2 \, , \\
& T = t_x \, (2a \, t_y + b \, t_1) \, ,  \quad  U = a \, t_x^2 \, (2\, t_z + b \, t_x) \, , 
\end{aligned}
\end{equation}
then we can reformulate the above (nonzero) values of~$\sigma$ as follows:
\begin{align*}
\sigma(1,1) & =  \sigma(1,x) = \sigma(x,1) = E\, ,\\
\sigma(x,x) & =  \frac{R}{E} \, , \displaybreak[1] \\
\sigma(y,y) & =  \sigma(z,y) = - \sigma(y,z) = \frac{S}{E} \, , \displaybreak[1] \\
\sigma(x,y) & =   - \sigma(x,z) = \frac{RT - EU}{2ER} \, , \displaybreak[1] \\
\sigma(y,x) & =  \sigma(z,x) =  \frac{RT + EU}{2ER} \, , \\
\sigma(z,z) & =  \frac{a \, U^2 - (b^2 - 4ac) \, R^3}{4a \, ER^2} \, .
\end{align*}
From the previous equalities we conclude
that $E^{\pm 1}$, $R^{\pm 1}$, $S$, $T$,~$U$ belong to~$\BB_{H_4}^{\alpha}$ 
and that they generate it as an algebra.

In \cite[Sect.~10]{AK} we obtained the following presentation of~$\BB_{H_4}^{\alpha}$ 
by generators and relations.

\begin{theorem}\label{H4universal}
We have
\[
\BB_{H_4}^{\alpha} \cong k[E^{\pm 1}, R^{\pm 1}, S, T, U]/(P_{a,b,c})\, ,
\]
where
\[
P_{a,b,c} = T^2 - 4 RS - \frac{b^2- 4ac}{a} \, E^2 R \, .
\]
\end{theorem}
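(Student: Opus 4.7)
The plan is to construct a surjective algebra morphism $\phi: Q \twoheadrightarrow \BB_{H_4}^{\alpha}$, where $Q := k[E^{\pm 1}, R^{\pm 1}, S, T, U]/(P_{a,b,c})$, and then to verify that $\phi$ is injective by a Krull-dimension comparison. The paragraph immediately preceding the theorem already shows that $E^{\pm 1}, R^{\pm 1}, S, T, U$ generate $\BB_{H_4}^{\alpha}$ as a subalgebra of $\Frac S(t_{H_4})$, so the only thing to check for the existence of $\phi$ is the relation $P_{a,b,c}(E,R,S,T,U) = 0$. Expanding the definitions~\eqref{ERSTU} gives
\[
T^2 - 4 R S \;=\; (b^2 - 4ac)\, t_1^2 t_x^2 \;=\; \tfrac{b^2 - 4ac}{a}\, E^2 R \, ,
\]
which is precisely $P_{a,b,c} = 0$, and yields the surjection $\phi$.

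Next I would show that $Q$ is an integral domain by proving that $P_{a,b,c}$ is irreducible in the UFD $k[E^{\pm 1}, R^{\pm 1}, S, T, U]$ (a localization of a polynomial ring over $k$). Regard $P_{a,b,c}$ as a polynomial of degree $2$ in $T$, namely $T^2 - c$ with
\[
c \;=\; R\bigl(4 S + \tfrac{b^2 - 4ac}{a}\, E^2\bigr) \in k[E^{\pm 1}, R^{\pm 1}, S, U] \, .
\]
Since $\operatorname{char} k \neq 2$, the polynomial $T^2 - c$ is irreducible precisely when $c$ is not a square. But $c$ has $S$-degree $1$ (regardless of whether $b^2 - 4ac$ vanishes), whereas any square has even $S$-degree, so $c$ is not a square. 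Consequently $(P_{a,b,c})$ is a height-one prime, and $Q$ is an integral domain of Krull dimension $5 - 1 = 4$.

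Finally I would conclude by comparing dimensions. The target $\BB_{H_4}^{\alpha}$ is a finitely generated $k$-domain, so its Krull dimension equals the transcendence degree of $\Frac \BB_{H_4}^{\alpha}$ over~$k$. The four elements $E = t_1$, $R = at_x^2$, $S$ (with leading monomial $at_y^2$), and $U$ (with leading monomial $2a\, t_x^2 t_z$) are algebraically independent over~$k$: taking a monomial order extending $t_z > t_y > t_x > t_1$, the leading term of $E^i R^j S^k U^l$ is a nonzero scalar times $t_1^i t_x^{2j+2l} t_y^{2k} t_z^l$, and these monomials are distinct for distinct tuples $(i,j,k,l)$. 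Thus $\Frac \BB_{H_4}^{\alpha}$ has transcendence degree $4$, so $\BB_{H_4}^{\alpha}$ has Krull dimension $4$ as well. A surjective morphism between finitely generated integral $k$-domains of equal Krull dimension must be an isomorphism, since a nonzero kernel would be a nonzero prime that strictly lowers the dimension. Hence $\phi$ is an isomorphism.

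The main obstacle I expect is the irreducibility step: one must make sure that the units $E^{\pm 1}$ and $R^{\pm 1}$ do not sabotage the argument, and one must uniformly handle the case $b^2 - 4ac = 0$, where the relation degenerates but stays irreducible. The $S$-degree observation is what makes the argument go through cleanly in both regimes; the remainder is standard commutative algebra.
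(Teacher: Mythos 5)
Your argument is correct and complete. Note that the paper itself gives no proof of this theorem --- it only cites \cite[Sect.~10]{AK} --- so I am judging your write-up on its own terms: the strategy (verify the relation $T^2-4RS=(b^2-4ac)\,t_1^2t_x^2=\frac{b^2-4ac}{a}E^2R$ to get the surjection, prove $P_{a,b,c}$ is irreducible so that the quotient is a $4$-dimensional domain, and match this against the transcendence degree of $\BB_{H_4}^{\alpha}$) is exactly the natural one and is in the same spirit as the argument in \cite{AK}. The two points that could have gone wrong are handled properly: the odd $S$-degree of $R\bigl(4S+\frac{b^2-4ac}{a}E^2\bigr)$ rules out its being a square uniformly, including when $b^2-4ac=0$, and the leading-monomial computation (leading terms $t_1^i t_x^{2j+2l}t_y^{2k}t_z^l$, pairwise distinct) does establish the algebraic independence of $E,R,S,U$, using $a\neq 0$ and $\operatorname{char}k\neq 2$ where needed. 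Two cosmetic remarks only: the letter $c$ is overloaded (scalar parameter versus constant term of the quadratic in $T$), and the transcendence degree is a priori only $\geq 4$ from the independence of $E,R,S,U$; the reverse inequality comes from the surjection itself (or from $T$ being algebraic over $k(E,R,S,U)$ via $P_{a,b,c}$), which you implicitly use and could state in one line.
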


It follows from the previous theorem
that the algebra morphisms from~$\BB_{H_4}^{\alpha}$
to a field~$K$ 
containing~$k$
are in one-to-one correspondence with the quintuples
$({e},{r}, {s}, {t},{u}) \in K^5$ verifying ${e} \neq 0$, ${r} \neq 0$, and
the equation
\begin{equation}\label{H4relation}
{t}^2 - 4 {r} {s} 
=  \frac{b^2- 4ac}{a} \, {e}^2 {r} \, .
\end{equation}
In other words, the set of $K$-points of~$\BB_{H_4}^{\alpha}$ 
is the hypersurface of equation~\eqref{H4relation}
in~$K^{\times} \times K^{\times} \times K \times K \times K$.

\section{The generic Galois extension}\label{generic algebra}

As in Section~\ref{generic cocycle}, we consider a Hopf algebra~$H$ 
and an invertible normalized cocycle $\alpha : H\times H \to k$.
Let ${}^{\alpha} H$ be the corresponding twisted algebra.

\subsection{The algebra~$\AA_H^{\alpha}$}\label{algebraA}

By the definition of the commutative algebra~$\BB_H^{\alpha}$
given in Section~\ref{algebraB},
the generic cocycle~$\sigma$ takes its values in~$\BB_H^{\alpha}$. 
Therefore we may apply the construction of Section~\ref{Twisted products}
and consider the twisted algebra
\begin{equation*}
\AA_H^{\alpha} = \BB_H^{\alpha} \otimes {}^{\sigma} H \, .
\end{equation*}
The product of~$\AA_H^{\alpha}$ is given
for all $b$, $c\in \BB_H^{\alpha}$ and $x$, $y \in H$ by
\begin{equation*}
(b\otimes u_x)  (c\otimes u_y)
= \sum_{(x), (y)}\, bc \, \sigma(x\sw1, y\sw1) \otimes  u_{x\sw2 y\sw2} \, .
\end{equation*}
We call~$\AA_H^{\alpha}$ the \emph{generic Galois extension}
attached to the cocycle~$\alpha$.

The subalgebra of co\"invariants of~$\AA_H^{\alpha}$
is equal to~$\BB_H^{\alpha} \otimes u_1$; 
this subalgebra is central in~$\AA_H^{\alpha}$.
Therefore, $\AA_H^{\alpha}$ is a 
central cleft $H$-Galois extension of~$\BB_H^{\alpha}$.

By~\cite[Prop.~5.3]{AK}, there is an algebra morphism
$\chi_0 : \BB_H^{\alpha} \to k$ such that
\[
\chi_0 \bigl( \sigma(x,y) \bigr) = \alpha(x,y)
\quad\text{and}\quad
\chi_0 \bigl( \sigma^{-1}(x,y) \bigr) = \alpha^{-1}(x,y)
\]
for all $x,y \in H$. 
Consider the maximal ideal $\mm_0 = \Ker (\chi_0 : \BB_H^{\alpha} \to k)$ 
of~$\BB_H^{\alpha}$. 
According to~\cite[Prop.~6.2]{AK},
there is an isomorphism of $H$-comodule algebras
\begin{equation*}
\AA_H^{\alpha}/\mm_0 \, \AA_H^{\alpha} \cong {}^{\alpha} H \, .
\end{equation*}
Thus, $\AA_H^{\alpha}$ is a \emph{flat deformation}
of~${}^{\alpha} H$ over the commutative algebra~$\BB_H^{\alpha}$.

Certain properties of~${}^{\alpha} H$ lift to 
the generic Galois extension~$\AA_H^{\alpha}$
such as the one recorded in the following result of~\cite{AK},
where $\Frac \BB_H^{\alpha}$ stands for
the field of fractions of~$\BB_H^{\alpha}$.

\begin{theorem}\label{AAsemisimple}
Assume that the ground field~$k$ is of characteristic zero and
the Hopf algebra~$H$ is finite-dimen\-sional.
If the algebra ${}^{\alpha} H$ is simple (resp.\ semisimple),
then so is 
\[
\Frac \BB_H^{\alpha} \otimes_{\BB_H^{\alpha}} \AA_H^{\alpha}
= \Frac \BB_H^{\alpha} \otimes {}^{\sigma} H \, .
\]
\end{theorem}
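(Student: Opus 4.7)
The plan is to exploit the fact that $\AA_H^{\alpha}$ is free of rank $n=\dim H$ as a $\BB_H^{\alpha}$-module (since it is a cleft $H$-Galois extension of $\BB_H^{\alpha}$), and that the specialization $\chi_0 : \BB_H^{\alpha} \to k$ of \cite[Prop.~5.3]{AK} identifies the quotient at $\mm_0$ with~${}^{\alpha}H$. Both the special fibre ${}^{\alpha}H$ and the generic fibre $C := \Frac \BB_H^{\alpha} \otimes {}^{\sigma}H$ are then finite-dimensional algebras of the same dimension~$n$. The strategy is to characterize semisimplicity and simplicity by the non-vanishing of certain elements of~$\BB_H^{\alpha}$, and then to note that any element of~$\BB_H^{\alpha}$ surviving $\chi_0$ automatically survives the inclusion $\BB_H^{\alpha} \hookrightarrow \Frac \BB_H^{\alpha}$, because $\BB_H^{\alpha}$ is a domain.

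For the semisimple case, fix a $k$-basis $(x_i)_{1\le i\le n}$ of~$H$; then $(1\otimes u_{x_i})$ is a $\BB_H^{\alpha}$-basis of~$\AA_H^{\alpha}$. Let $\mathrm{tr} : \AA_H^{\alpha} \to \BB_H^{\alpha}$ be the trace of the regular representation in this basis, and set
\[
d \;=\; \det \bigl( \mathrm{tr} \bigl( (1\otimes u_{x_i}) (1\otimes u_{x_j}) \bigr) \bigr)_{1\le i,j\le n} \;\in\; \BB_H^{\alpha}.
\]
Trace and determinant commute with base change, so the image of~$d$ under $\chi_0$ (resp.\ under $\BB_H^{\alpha} \hookrightarrow \Frac \BB_H^{\alpha}$) is the discriminant of the regular representation of~${}^{\alpha}H$ (resp.\ of~$C$). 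In characteristic zero, Dedekind's criterion says that a finite-dimensional algebra is semisimple if and only if this discriminant is nonzero. If ${}^{\alpha}H$ is semisimple, then $\chi_0(d) \neq 0$, hence $d\neq 0$ in the domain $\BB_H^{\alpha}$, hence $d$ is a unit in $\Frac \BB_H^{\alpha}$, and $C$ is semisimple.

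For the simple case, $C$ is already semisimple by the previous step, so simplicity reduces to showing that the centre $Z(C)$ is a field, or equivalently that $C$ contains no central idempotent besides $0$ and~$1$. Since $\Frac \BB_H^{\alpha}$ is flat over $\BB_H^{\alpha}$ and the centre of an algebra is the kernel of the adjoint action, we have $Z(C) \cong \Frac \BB_H^{\alpha} \otimes_{\BB_H^{\alpha}} Z(\AA_H^{\alpha})$. The plan is then to take a central idempotent $e \in Z(C)$, clear denominators to obtain an element in $Z(\AA_H^{\alpha})[1/g]$ for some $g \in \BB_H^{\alpha} \setminus \{0\}$, and specialize it along~$\chi_0$ to a central idempotent of~${}^{\alpha}H$, which must be $0$ or~$1$ by simplicity of the special fibre, yielding $e \in \{0,1\}$.

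The main obstacle is precisely this specialization step: a priori, $g$ could lie in~$\mm_0$, so that $\chi_0$ does not extend to $\BB_H^{\alpha}[1/g]$. The argument therefore requires an integrality input, namely that central idempotents of~$C$ come from an integral model over~$\BB_H^{\alpha}$ at~$\mm_0$. The cleft $H$-Galois structure of $\AA_H^{\alpha}$ over~$\BB_H^{\alpha}$ should provide this: the finitely generated $\BB_H^{\alpha}$-submodule $Z(\AA_H^{\alpha})$ is integral over~$\BB_H^{\alpha}$, and the $H$-coaction, being compatible with the centre, places enough symmetry on the decomposition $Z(C) = \prod_i Z_i$ to force any nontrivial central idempotent to have a well-defined reduction at~$\mm_0$. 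Overcoming this descent-of-idempotents issue in the noncommutative setting is the heart of the proof; once accomplished, the simple case follows in a few lines.
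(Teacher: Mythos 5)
Your argument for the \emph{semisimple} half is complete and correct: since $\AA_H^{\alpha}=\BB_H^{\alpha}\otimes u_H$ is free of rank $n=\dim H$ over the domain $\BB_H^{\alpha}$, the discriminant $d$ of the regular trace form commutes with base change, Dedekind's criterion applies in characteristic zero, and $\chi_0(d)\neq 0$ forces $d\neq 0$, hence $d$ invertible in $\Frac\BB_H^{\alpha}$. This is a legitimate (and rather slick) route, different from the one in~\cite{AK}.

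The \emph{simple} half, however, has a genuine gap, which you yourself flag: a central idempotent $e$ of $C=\Frac\BB_H^{\alpha}\otimes{}^{\sigma}H$ lives a priori only in $Z(\AA_H^{\alpha})[1/g]$ for some $g\in\BB_H^{\alpha}\setminus\{0\}$, and nothing prevents $g\in\mm_0$; the appeal to ``integrality'' does not repair this, because although $e$ is integral over $\BB_H^{\alpha}$ (it satisfies $e^2=e$), $Z(\AA_H^{\alpha})$ need not be integrally closed in $Z(C)$, so $e$ need not descend to an element that can be reduced modulo $\mm_0$. The promised ``symmetry from the $H$-coaction'' is not an argument. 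The idea that closes this (and is the one used in~\cite{AK}) bypasses idempotents entirely: by its very construction~\eqref{sigma-def}, the generic cocycle $\sigma$ is \emph{cohomologous to $\alpha$ over $\Frac S(t_H)$}, the coboundary being implemented by $x\mapsto t_x$. Hence $\Frac S(t_H)\otimes{}^{\sigma}H\cong\Frac S(t_H)\otimes_k{}^{\alpha}H$ as algebras. Since $H$ is finite-dimensional, $\Frac S(t_H)$ is a purely transcendental extension of $k$, so in characteristic zero the right-hand side is simple (resp.\ semisimple) whenever ${}^{\alpha}H$ is: the center of ${}^{\alpha}H$ is a finite separable extension of $k$ and stays a field after a purely transcendental base change. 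Finally, $\Frac S(t_H)\otimes{}^{\sigma}H$ is obtained from $C$ by the field extension $\Frac\BB_H^{\alpha}\subset\Frac S(t_H)$, and a nonzero proper two-sided ideal of $C$ would extend to one upstairs; so $C$ is simple (resp.\ semisimple). You should either adopt this route for the simple case or supply an actual proof that the relevant central idempotents specialize at~$\mm_0$.
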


\subsection{Forms}\label{forms}

We have just observed that ${}^{\alpha} H \cong \AA_H^{\alpha}/\mm_0 \, \AA_H^{\alpha}$ 
for some maximal ideal~$\mm_0$ of~$\BB_H^{\alpha}$.
We may now wonder what can be said of the other \emph{central specializations}
of~$\AA_H^{\alpha}$, that is of the quotients $\AA_H^{\alpha}/\mm \, \AA_H^{\alpha}$,
where $\mm$ is an arbitrary maximal ideal of~$\BB_H^{\alpha}$.
To answer this question, we need the following terminology.

Let $\beta : H \times H \to K$ be a normalized invertible cocycle
with values in a field~$K$ containing the ground field~$k$.
We say that the twisted $H$-comodule algebra $K \otimes {}^{\beta} H$
is a \emph{$K$-form} of ${}^{\alpha} H$ if
there is a field~$L$ containing~$K$ and 
an $L$-linear isomorphism of~$H$-comodule algebras
\[
L \otimes_K (K \otimes {}^{\beta} H) \cong L\otimes_k {}^{\alpha} H \, .
\]

We now state two theorems relating forms of~${}^{\alpha} H$
to central specializations of
the generic Galois extension~$ \AA_H^{\alpha}$.
For proofs, see \cite[Sect.~7]{AK}.

\begin{theorem}\label{formthm2}
For any $K$-form $K \otimes {}^{\beta} H$ of ${}^{\alpha} H$,
where $\beta : H \times H \to K$ is a normalized invertible cocycle
with values in an extension~$K$ of~$k$,
there exist an algebra morphism $\chi : \BB_H^{\alpha} \to K$ 
and a $K$-linear isomorphism of~$H$-comodule algebras
\[
K_{\chi} \otimes_{\BB_H^{\alpha}} \AA_H^{\alpha} 
\cong K \otimes {}^{\beta} H \, .
\]
\end{theorem}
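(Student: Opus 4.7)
The plan is to unpack the $K$-form hypothesis into a cohomology relation between $\alpha$ and $\beta$ over a larger field, and then use the universal property of the generic cocycle to produce~$\chi$. Since $K \otimes {}^{\beta} H$ is a $K$-form of~${}^{\alpha} H$, there is a field~$L \supset K$ and an $L$-linear isomorphism of $H$-comodule algebras
\[
L \otimes_k {}^{\alpha} H \;\cong\; L \otimes_K (K \otimes {}^{\beta} H) \;=\; L \otimes_k {}^{\beta} H \, .
\]
By the classification of isomorphisms of twisted algebras recalled in Subsection~\ref{Isomorphisms} (cf.~\cite{BCM, Doi}), the cocycles $\alpha$ and $\beta$, regarded as cocycles with values in~$L$, must then be cohomologous: there exists a convolution-invertible linear map $\lambda : H \to L$ with
\[
\beta(x,y) = \sum_{(x), (y)} \lambda(x_{(1)}) \, \lambda(y_{(1)}) \, \alpha(x_{(2)}, y_{(2)}) \, \lambda^{-1}(x_{(3)} y_{(3)})
\]
for all $x, y \in H$.

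Imitating the proof of~\cite[Prop.~5.3]{AK} (which treats the case $\lambda = \eps$, $\beta = \alpha$, $L = k$), I would then produce an algebra morphism $\chi_L : \BB_H^{\alpha} \to L$ with $\chi_L(\sigma(x,y)) = \beta(x,y)$ and $\chi_L(\sigma^{-1}(x,y)) = \beta^{-1}(x,y)$ for all $x, y \in H$. The starting point is the algebra morphism $S(t_H) \to L$ sending $t_x \mapsto \lambda(x)$; the task is to extend it to the subalgebra $\BB_H^{\alpha}$ of~$\Frac S(t_H)$. Once this extension is in hand, formulas~\eqref{sigma-def}--\eqref{sigma^{-1}-def} together with the cohomology equation above give the stated values on the generators of~$\BB_H^{\alpha}$. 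Since those values lie in~$K \subset L$ and $\BB_H^{\alpha}$ is generated as a $k$-algebra by the values of~$\sigma$ and~$\sigma^{-1}$, the morphism $\chi_L$ factors through an algebra morphism $\chi : \BB_H^{\alpha} \to K$.

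I would then conclude by base change along~$\chi$. As a $K$-vector space,
\[
K_{\chi} \otimes_{\BB_H^{\alpha}} \AA_H^{\alpha} \;=\; K_{\chi} \otimes_{\BB_H^{\alpha}} \bigl(\BB_H^{\alpha} \otimes {}^{\sigma} H\bigr) \;\cong\; K \otimes u_H \, ,
\]
and the multiplication formula~\eqref{twisted-multiplication} for~$\AA_H^{\alpha}$ specializes, via $\chi \circ \sigma = \beta$, to the twisted product attached to~$\beta$; the $H$-coaction $\id \otimes \Delta$ is unchanged under the specialization. Hence the resulting $K$-algebra is $K \otimes {}^{\beta} H$ as an $H$-comodule algebra, as required.

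The main obstacle is the well-definedness of~$\chi_L$ on~$\BB_H^{\alpha} \subset \Frac S(t_H)$: the naive assignment $\Phi(t_x) = \lambda(x)$ only defines a morphism out of~$S(t_H)$, and one must control the denominators appearing in the expressions of the $\sigma(x,y)$ and $\sigma^{-1}(x,y)$ as rational functions in the~$t_x$. The key point is that the fractions $t^{-1}_x$ are characterized uniquely by the convolution identities~\eqref{tt} (\cite[Lemma~A.1]{AK}), and the analogous identities hold in~$L$ for the convolution-invertible pair~$(\lambda, \lambda^{-1})$; this should let one extend $\Phi$ compatibly by $t^{-1}_x \mapsto \lambda^{-1}(x)$ to a subalgebra of~$\Frac S(t_H)$ containing all the $\sigma(x,y)$ and~$\sigma^{-1}(x,y)$, and hence containing~$\BB_H^{\alpha}$.
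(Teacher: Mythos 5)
Your proposal is correct and follows essentially the same route as the proof the paper points to in~\cite[Sect.~7]{AK}: extract from the $K$-form hypothesis a convolution-invertible $\lambda : H \to L$ making $\alpha$ and $\beta$ cohomologous over~$L$, specialize $t_x \mapsto \lambda(x)$, $t^{-1}_x \mapsto \lambda^{-1}(x)$ to get $\chi$ with $\chi(\sigma) = \beta$ landing in~$K$, and conclude by base change. The one point you leave as a sketch --- well-definedness of the specialization on the denominators of the $t^{-1}_x$ --- is exactly the content of~\cite[Lemma~A.1]{AK} together with the nonvanishing, under $t_x \mapsto \lambda(x)$, of the relevant determinants on finite-dimensional subcoalgebras, which holds because $\lambda$ is convolution invertible; so the argument goes through as you indicate.
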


Here $K_{\chi}$ stands for~$K$ equipped with the  
$\BB_H^{\alpha}$-module structure induced by
the algebra morphism $\chi : \BB_H^{\alpha} \to K$.
We have 
\[
K_{\chi} \otimes_{\BB_H^{\alpha}} \AA_H^{\alpha} 
\cong \AA_H^{\alpha}/\mm_{\chi} \, \AA_H^{\alpha} \, ,
\]
where $\mm_{\chi} = \Ker(\chi : \BB_H^{\alpha} \to K)$.

There is a converse to Theorem~\ref{formthm2}; 
it requires an additional condition.

\begin{theorem}\label{formthm}
If $\Frac S(t_H)$ is integral over the subalgebra~$\BB_H^{\alpha}$, 
then for any field~$K$ containing~$k$ and any algebra morphism
$\chi : \BB_H^{\alpha} \to K$, the $H$-comodule $K$-algebra
$K_{\chi} \otimes_{\BB_H^{\alpha}} \AA_H^{\alpha} 
= \AA_H^{\alpha}/ \mm_{\chi} \, \AA_H^{\alpha}$
is a $K$-form of~${}^{\alpha} H$.
\end{theorem}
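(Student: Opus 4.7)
The plan is to recast the specialization $\AA_H^\alpha/\mm_\chi \AA_H^\alpha$ as a twisted algebra for a push-forward cocycle, and then to use the integrality hypothesis to show that this cocycle becomes cohomologous to $\alpha$ over a suitable field $L$ extending~$K$. The first step is formal: since $\AA_H^\alpha = \BB_H^\alpha \otimes {}^\sigma H$, one identifies
\[
K_\chi \otimes_{\BB_H^\alpha} \AA_H^\alpha \;\cong\; K \otimes_k {}^{\chi\circ \sigma} H
\]
as $H$-comodule $K$-algebras, where $\chi\circ\sigma : H\times H \to K$ is the normalized invertible cocycle obtained by composing~$\sigma$ with~$\chi$. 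By the classification of cleft extensions recalled in Subsection~\ref{Isomorphisms}, it then suffices to exhibit a field $L \supseteq K$ and a convolution-invertible linear map $\lambda : H \to L$ realizing the cohomology between $\chi\circ\sigma$ and~$\alpha$ (both viewed as $L$-valued cocycles).

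The key construction is that of $\lambda$, and this is where the integrality hypothesis is used. Let $\widehat S \subset \Frac S(t_H)$ denote the subalgebra generated by $\BB_H^\alpha$ together with all symbols $t_x$ and $t^{-1}_x$; being contained in~$\Frac S(t_H)$, it is integral over~$\BB_H^\alpha$ by hypothesis. The lying-over theorem produces a prime ideal $\mathfrak{P} \subset \widehat S$ with $\mathfrak{P} \cap \BB_H^\alpha = \mm_\chi$, and the induced inclusion of residue domains $\BB_H^\alpha/\mm_\chi \hookrightarrow \widehat S/\mathfrak{P}$ is again integral. Composing $\chi$ with the natural embedding of $\Frac(\BB_H^\alpha/\mm_\chi)$ into any algebraic closure~$L$ of~$K$ and extending along the algebraic extension $\Frac(\widehat S/\mathfrak{P})$ of $\Frac(\BB_H^\alpha/\mm_\chi)$ via the universal property of algebraic closures, one obtains an algebra morphism $\tilde\chi : \widehat S \to L$ whose restriction to~$\BB_H^\alpha$ equals~$\chi$.

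The conclusion is then routine. Set $\lambda(x) = \tilde\chi(t_x)$ for $x \in H$. Applying $\tilde\chi$ to equation~\eqref{tt} shows that $\lambda$ is convolution-invertible with inverse $x\mapsto \tilde\chi(t^{-1}_x)$, and applying $\tilde\chi$ to the defining formula~\eqref{sigma-def} yields
\[
\chi\bigl(\sigma(x,y)\bigr) = \sum_{(x),(y)} \lambda(x_{(1)})\, \lambda(y_{(1)})\, \alpha(x_{(2)},y_{(2)})\, \lambda^{-1}(x_{(3)} y_{(3)})
\]
for all $x, y \in H$, which is exactly the relation~\eqref{cohomologous} expressing that $\chi\circ\sigma$ and~$\alpha$ are cohomologous over~$L$. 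The isomorphism criterion of Subsection~\ref{Isomorphisms} then furnishes an isomorphism $L\otimes_K (K\otimes_k {}^{\chi\circ\sigma}H) \cong L\otimes_k {}^\alpha H$ of $H$-comodule $L$-algebras, which is precisely the statement that $K_\chi \otimes_{\BB_H^\alpha}\AA_H^\alpha$ is a $K$-form of~${}^\alpha H$.

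The main difficulty is the second step, namely extending~$\chi$ to the enlarged algebra~$\widehat S$: the integrality hypothesis is used precisely to guarantee, via lying-over, that each symbol~$t_x$ admits a value in some overfield of~$K$ compatible with~$\chi$. Without this hypothesis the statement fails in general: Subsection~\ref{ZZZ} already shows that for $H = k[\ZZ]$ the ring~$\Frac S(t_H)$ is transcendental over~$\BB_H^\alpha$, so that the free variable~$t_1$ cannot be consistently specialized from~$\chi$ alone.
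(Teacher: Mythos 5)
Your argument is correct and is essentially the proof that the paper defers to (\cite{AK}, Sect.~7): identify the central specialization with $K\otimes{}^{\chi\circ\sigma}H$, use lying-over for the integral extension to extend $\chi$ to the symbols $t_x$, $t^{-1}_x$ with values in an algebraic closure $L$ of $K$, and apply the extended character to~\eqref{sigma-def} to see that $\chi\circ\sigma$ becomes cohomologous to~$\alpha$ over~$L$. Your care in invoking integrality only for the subalgebra generated by the $t_x$ and $t^{-1}_x$ (rather than for the whole field $\Frac S(t_H)$) is exactly the right way to read the hypothesis, and the rest is routine.
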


It follows that if $\Frac S(t_H)$ is integral over~$\BB_H^{\alpha}$, 
then the map 
\begin{eqnarray*}
\Alg(\BB_H^{\alpha}, K) & \longrightarrow & K\text{-}\Forms({}^{\alpha} H) \\
\chi &\longmapsto& K_{\chi} \otimes _{\BB_H^{\alpha}} \AA_H^{\alpha}
= \AA_H^{\alpha}/ \mm_{\chi} \, \AA_H^{\alpha}
\end{eqnarray*}
is a \emph{surjection} from the set of algebra morphisms $\BB_H^{\alpha} \to K$ 
to the set of isomorphism classes of $K$-forms of~${}^{\alpha} H$.
Thus the set $\Alg(\BB_H^{\alpha}, K)$ parametrizes 
the $K$-forms of~${}^{\alpha} H$.
Using terminology of singularity theory, we say 
that the Galois extension
$\BB_H^{\alpha} \subset \AA_H^{\alpha}$ is a
\emph{versal deformation space} for the forms of~${}^{\alpha} H$
(we would call this space universal if the above surjection was bijective).

By Theorem~\ref{AAsemisimple}, the central localization
$\Frac \BB_H^{\alpha} \otimes_{\BB_H^{\alpha}} \AA_H^{\alpha}$
is a simple algebra if the algebra ${}^{\alpha} H$ is simple.
Under the integrality condition above, we have the following related result
(see~\cite[Th.~7.4]{AK}).

\begin{theorem}\label{Azumaya}
If $\Frac S(t_H)$ is integral over~$\BB_H^{\alpha}$
and if the algebra ${}^{\alpha} H$ is simple,
then $\AA_H^{\alpha}$ is an Azumaya algebra with center~$\BB_H^{\alpha}$.
\end{theorem}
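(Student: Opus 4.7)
The plan is to verify the standard fibral criterion for Azumaya algebras: a finitely generated projective $B$-algebra $A$ is Azumaya with center~$B$ as soon as, for every maximal ideal $\mm$ of~$B$, the fiber $A/\mm A$ is a central simple algebra over~$B/\mm$. Here the integrality hypothesis and the simplicity of~${}^{\alpha}H$ will drive the two ingredients of this criterion.

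First, from the very definition $\AA_H^{\alpha} = \BB_H^{\alpha} \otimes {}^{\sigma}H$, the algebra $\AA_H^{\alpha}$ is free of rank $\dim_k H$ over $\BB_H^{\alpha}$, with basis $\{1 \otimes u_{x_i}\}_i$ coming from any $k$-basis $\{x_i\}_i$ of~$H$; so finite projectivity is immediate, and the coinvariant remark of Subsection~\ref{Twisted products} already forces the center of $\AA_H^{\alpha}$ to \emph{contain}~$\BB_H^{\alpha}$.

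Next, fix a maximal ideal $\mm \subset \BB_H^{\alpha}$, set $K = \BB_H^{\alpha}/\mm$, and let $\chi : \BB_H^{\alpha} \to K$ be the quotient. Theorem~\ref{formthm}, which applies precisely because of the integrality of $\Frac S(t_H)$ over~$\BB_H^{\alpha}$, identifies the fiber
\[
\AA_H^{\alpha}/\mm \AA_H^{\alpha} \cong K_{\chi} \otimes_{\BB_H^{\alpha}} \AA_H^{\alpha}
\]
as a $K$-form of~${}^{\alpha}H$: there exist a field extension $L\supseteq K$ and an $L$-linear isomorphism of $H$-comodule algebras
\[
L \otimes_K (\AA_H^{\alpha}/\mm \AA_H^{\alpha}) \cong L\otimes_k {}^{\alpha}H.
\]
Since ${}^{\alpha}H$ is simple and finite-dimensional with center equal to~$k$, the right-hand side is central simple over~$L$; faithfully flat descent along the field extension $K \subset L$ then transports central simplicity back to the fiber itself. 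Applying the fibral criterion finishes the proof and pins down the center of~$\AA_H^{\alpha}$ as exactly~$\BB_H^{\alpha}$.

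The step I expect to be the main obstacle is the tacit passage from ``simple'' to ``central simple over~$k$'' for~${}^{\alpha}H$: the hypothesis only provides simplicity, but to force each fiber to be \emph{central} simple over its residue field --- and hence to rule out the center of $\AA_H^{\alpha}$ being some finite integral extension of~$\BB_H^{\alpha}$ rather than~$\BB_H^{\alpha}$ itself --- one really needs the center of ${}^{\alpha}H$ to be~$k$. A separate structural lemma is needed here; in the Sweedler case it was checked by hand (cf.\ Section~\ref{Sweedler}), and in general it should follow from the cleft Hopf Galois structure together with the generic-fiber simplicity of Theorem~\ref{AAsemisimple} (applied to the domain $\BB_H^{\alpha}$ and the torsion-free $\BB_H^{\alpha}$-module $\AA_H^{\alpha}$, so that any central element already lies in $\Frac \BB_H^{\alpha} \cap \AA_H^{\alpha} = \BB_H^{\alpha}$). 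Once this centrality is in place, the descent step above is clean, and the rest is standard commutative algebra applied to the free $\BB_H^{\alpha}$-module $\AA_H^{\alpha}$.
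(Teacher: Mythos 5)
The paper itself gives no proof of Theorem~\ref{Azumaya}: it is quoted from \cite[Th.~7.4]{AK}, so there is no in-paper argument to set yours against. Your architecture --- freeness of $\AA_H^{\alpha}$ over $\BB_H^{\alpha}$ (which needs $H$ finite-dimensional, implicit here as in Theorem~\ref{AAsemisimple}), the fibral criterion for Azumaya algebras, Theorem~\ref{formthm} to identify each fiber $\AA_H^{\alpha}/\mm\,\AA_H^{\alpha}$ as a $K$-form of~${}^{\alpha}H$, and descent along $K\subset L$ --- is the natural route and surely the intended one. The difficulty is exactly the step you flag yourself, and the patch you propose for it does not work.

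The ``structural lemma'' you hope for --- that a simple cleft $H$-Galois object ${}^{\alpha}H$ automatically has center~$k$ --- is false. Take $k=\mathbb{Q}$, $H=k[\ZZ/2]$ and the normalized cocycle with $\alpha(x,x)=2$: then ${}^{\alpha}H=k[u_x]/(u_x^2-2)\cong\mathbb{Q}(\sqrt{2})$ is simple but equal to its own center. The integrality hypothesis holds (by the computation of Subsection~\ref{ZZZ}, $S(t_H)$ is integral over $\BB_H^{\alpha}=k[y_0^{\pm1},y_2^{\pm1}]$ since $t_1^2=y_0/y_2$), yet $\AA_H^{\alpha}$ is a \emph{commutative} $\BB_H^{\alpha}$-algebra of rank two, hence cannot be Azumaya with center~$\BB_H^{\alpha}$; concretely, at any maximal ideal where $\chi(\sigma(x,x))$ becomes a square the fiber is $K\times K$, a non-simple $K$-form of $\mathbb{Q}(\sqrt{2})$. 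So under the literal reading of ``simple'' no argument can close your gap --- the hypothesis must be read as ``central simple'' (center exactly~$k$), which is what holds in all the examples treated in the paper (for the Sweedler algebra the triviality of the center of $A_{a,b,c}$ is verified explicitly in Section~\ref{Sweedler}). Note also that your fallback appeal to Theorem~\ref{AAsemisimple} cannot supply the missing centrality: that theorem yields simplicity of $\Frac\BB_H^{\alpha}\otimes{}^{\sigma}H$, not that its center equals $\Frac\BB_H^{\alpha}$, and in the example above the generic fiber is a quadratic field extension of $\Frac\BB_H^{\alpha}$. Once centrality of ${}^{\alpha}H$ over~$k$ is assumed, $L\otimes_k{}^{\alpha}H$ is central simple over~$L$ for every extension $L/k$, descent makes every fiber central simple over its residue field, and your argument is complete.
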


This means that $\AA_H^{\alpha}/\mm \, \AA_H^{\alpha}$ is a simple algebra
for any maximal ideal~$\mm$ of~$\BB_H^{\alpha}$.
For instance, any full matrix algebra with entries in a commutative
algebra is Azumaya.

\begin{example}
For the Sweedler algebra~$H_4$, we proved in~\cite[Sect.~10]{AK}
that $\AA_{H_4}^{\alpha}$ is given as an algebra by
\begin{equation}
\AA_{H_4}^{\alpha} \cong \BB_{H_4}^{\alpha}
\langle\, X,Y \, \rangle/
(X^2-R \, , \, Y^2-S \, , \, XY+YX - T)\, ,
\end{equation}
where $\BB_{H_4}^{\alpha}$ is as in Theorem~\ref {H4universal}
and the elements $R,S,T$ of~$\BB_{H_4}^{\alpha}$ are defined by~\eqref{ERSTU}.
As an $\BB_{H_4}^{\alpha}$-module, $\AA_{H_4}^{\alpha}$ is free with basis $\{1,X,Y,XY\}$.
\end{example}

\section{The integrality condition}\label{integrality}

In view of Theorem~\ref{formthm} it is natural to ask the following question.

\begin{Qu}\label{Question1}
Under which condition on the pair~$(H,\alpha)$ is 
$\Frac S(t_H)$ integral
over the subalgebra~$\BB_H^{\alpha}$?
\end{Qu}

Question~\ref{Question1} has a negative answer in the case where
$H= k[\ZZ]$ and $\alpha$ is the trivial cocycle.
Indeed, it follows from~\eqref{B-Z}
that $\Frac S(t_H)$ is then a pure transcendental extension
(of degree~one) of the field of fractions of~$\BB_H^{\alpha}$.

We give a positive answer in the following important case.

\begin{theorem}\label{integralthm}
Let $H$ be a Hopf algebra generated as an algebra by a set~$\Sigma$ of
grouplike and skew-primitive elements such that
the grouplike elements of~$\Sigma$ are of finite order and
generate the group of grouplike elements of~$H$
and such that each skew-primitive element of~$\Sigma$
generates a finite-dimensional subalgebra of~$H$.
Then $\Frac S(t_H)$ is integral over the subalgebra~$\BB_H^{\alpha}$
for every cocycle~$\alpha$ of~$H$.
\end{theorem}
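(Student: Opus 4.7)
The plan is to establish that $t_x$ is integral over~$\BB_H^{\alpha}$ for every $x\in H$. Since $x\mapsto t_x$ is $k$-linear, $H$ is generated as an algebra by~$\Sigma$, and the set of integral elements forms a subring, it will be enough to prove $t_p$ integral for every monomial $p = s_1 s_2 \cdots s_m$ in~$\Sigma$. I will organize this in two steps: (i)~handle single generators $s\in\Sigma$, and (ii)~propagate to products by induction on the length~$m$.

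\textbf{Single generators.} From $\sigma(1_H,1_H) = t_{1_H}$ and $\sigma^{-1}(1_H,1_H) = t_{1_H}^{-1}$ one reads off that $t_{1_H}$ is already a unit of~$\BB_H^{\alpha}$. For a grouplike $g \in \Sigma$ of finite order~$n$, formula~\eqref{sigma-group} gives $\sigma(g^k,g) = \alpha(g^k,g)\, t_{g^k}\, t_g / t_{g^{k+1}}$, and the telescoping product
\[
\prod_{k=1}^{n-1} \sigma(g^k, g) \;=\; \biggl(\prod_{k=1}^{n-1} \alpha(g^k, g)\biggr) \, \frac{t_g^n}{t_{1_H}}
\]
(using $g^n = 1_H$) expresses $t_g^n$ as $t_{1_H}$ times a unit of~$\BB_H^{\alpha}$, whence $t_g$ satisfies a monic equation of degree~$n$ over~$\BB_H^{\alpha}$. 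For a $(g,h)$-skew-primitive generator $s\in\Sigma$, fix $P(T)\in k[T]$ of minimal degree~$N\geq 1$ with $P(s) = 0$, furnished by the finite-dimensionality of~$k\langle s\rangle$. Expanding $\sigma(s, s^i)$ for $i = 0, 1, \dots, N-1$ via the iterated coproduct of~$s$ induced from $\Delta(s) = g\otimes s + s\otimes h$, and then invoking $P(s) = 0$ to collect terms, yields a polynomial of degree~$N$ in~$t_s$ with coefficients in the subring generated by~$\BB_H^{\alpha}$, $t_g$, and~$t_h$. Since $g, h$ lie in the group of grouplikes of~$H$ (which is generated by the grouplike elements of~$\Sigma$), the previous case has already made $t_g$, $t_h$ integral, so $t_s$ is integral as well. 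The Sweedler computation in Section~\ref{Sweedler} producing $S = a\, t_y^2 + b\, t_1 t_y + c\, t_1^2 \in \BB_{H_4}^{\alpha}$ is the prototype for this argument.

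\textbf{Propagation, and main obstacle.} For a product $p = s \cdot q$ with $s\in\Sigma$ and $q$ a shorter monomial (integral by induction), I solve~\eqref{sigma^{-1}-def} for the ``top'' term $t_{sq}$. When $s = g$ is grouplike,
\[
\sigma^{-1}(g, q) \;=\; \sum_{(q)} t_{g\, q\sw1}\, \alpha^{-1}(g, q\sw2)\, t_g^{-1}\, t_{q\sw3}^{-1},
\]
and a secondary induction along the coradical filtration of~$H$ allows me to isolate the contribution with $q\sw1 = q$ and solve for $t_{gq}$ in terms of $\sigma^{-1}(g,q)$, $t_g^{-1}$, the $t_{q\sw3}^{-1}$, and strictly lower-filtration $t_{g q'}$; an analogous expansion handles $s$ skew-primitive, in which case $\Delta(s) = g\otimes s + s\otimes h$ produces two leading contributions. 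The integrality of all the ingredients then forces $t_{sq}$ integral, closing the induction. The hard part will be this recursive bookkeeping: the coproduct of a long monomial~$q$ has many terms, and one must filter~$S(t_H)$ by the coradical degree of the label in order to pick out the leading $t_{g q\sw1}$ cleanly. The skew-primitive base case is also combinatorially delicate, because one must match the expansion of~$\Delta(s^i)$ against $P(s) = 0$ to extract a clean polynomial identity among the~$t_{s^j}$; but the small example of the Sweedler algebra shows exactly how such a matching should be carried out in general.
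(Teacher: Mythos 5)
Your overall skeleton coincides with the paper's: reduce to showing $t_z$ is integral for every monomial $z$ in~$\Sigma$, handle grouplike generators via their finite order (your telescoping product is exactly the paper's formula~\eqref{ggg}), handle a skew-primitive generator via the finite-dimensionality of the subalgebra it generates, and then propagate to arbitrary products by induction on length. The grouplike case is correct essentially as you wrote it.

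The gap lies in the two remaining steps, and it is the same gap both times: you never produce the identity that makes them work. The paper's proof rests on Lemma~\ref{txtytz}, whose $n=2$ case~\eqref{txty} reads
\[
t_{xy} = \sum_{(x),(y)} \sigma^{-1}(x_{(1)},y_{(1)})\, t_{x_{(2)}}\, t_{y_{(2)}}\, \alpha(x_{(3)},y_{(3)})\, ,
\]
an exact closed formula in which every convolution-inverse symbol $t^{-1}_z$ has cancelled: the right-hand side involves only values of~$\sigma^{-1}$ (which lie in~$\BB_H^{\alpha}$), scalars $\alpha(\cdot,\cdot)$, and the symbols $t_{x_{(2)}}, t_{y_{(2)}}$ of the individual factors. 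Your proposal instead expands $\sigma(s,s^i)$ and $\sigma^{-1}(g,q)$ directly from their definitions; those expressions contain factors $t^{-1}_{z}$ with $z$ a \emph{product} (for instance $t^{-1}_{s\,y_{(3)}}$ when $s$ is skew-primitive), and these are rational functions defined only recursively by~\eqref{tt}, whose integrality over~$\BB_H^{\alpha}$ you neither address nor may assume --- so ``collecting terms'' into a polynomial in~$t_s$ with integral coefficients does not go through as stated. Likewise your propagation step replaces an exact identity by a plan (``isolate the leading contribution\dots\ a secondary induction along the coradical filtration'') that you yourself flag as the hard, unexecuted part; with~\eqref{txty} in hand no such isolation is needed, since $t_{gq}$ is given outright as a $\BB_H^{\alpha}$-linear combination of products of the $t$'s of the factors, and the combinatorics of which $t$'s appear is controlled by~\eqref{gitere} and~\eqref{xitere}. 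In short: right strategy and a correct grouplike case, but the central lemma that eliminates the $t^{-1}$ symbols is missing, and without it the skew-primitive and propagation steps remain assertions rather than proofs.
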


Theorem~\ref{integralthm} implies a positive answer to Question~\ref{Question1}
for any finite-dimensional Hopf algebra generated by grouplike and
skew-primitive elements.
It is conjectured that all finite-dimensional \emph{pointed} Hopf algebras 
are generated by grouplike and skew-primitive elements;
if this conjecture holds, then Question~\ref{Question1} has a positive answer 
for any finite-dimensional Hopf algebra that is pointed.

Recall that $g\in H$ is \emph{grouplike} if 
$\Delta(g) = g \otimes g$;
it then follows that $\eps(g) = 1$.
The inverse of a grouplike element and the product of two 
grouplike elements are grouplike.
An element $x\in H$ is \emph{skew-primitive} if 
\begin{equation}\label{skew}
\Delta(x) = g \otimes x + x \otimes h
\end{equation} 
for some grouplike elements $g,h \in H$;
this implies $\eps(x) = 0$. 
The product of a skew-primitive element by
a grouplike element is skew-primitive.

In order to prove Theorem~\ref{integralthm}, we need the following lemma.

\begin{lemma}\label{txtytz}
If $x^{[1]}, \ldots, x^{[n]}$ are elements of~$H$, then
\begin{align*}
\begin{split}
t_{x^{[1]} \cdots x^{[n]}} 
& =  \sum_{x^{[1]}, \ldots, x^{[n]}}\, 
\sigma^{-1}(x^{[1]}_{(1)} \cdots x^{[n-1]}_{(1)}, x^{[n]}_{(1)}) \, \times \\
& \hskip 40pt \times
\sigma^{-1}(x^{[1]}_{(2)} \cdots x^{[n-2]}_{(2)}, x^{[n-1]}_{(2)}) \cdots
\sigma^{-1}(x^{[1]}_{(n-1)} , x^{[2]}_{(n-1)})  \, \times \\
& \hskip 40pt \times t_{x^{[1]}_{(n)}} \, t_{x^{[2]}_{(n)}} \cdots  
t_{x^{[n-1]}_{(3)}} \, t_{x^{[n]}_{(2)}} \, \times \\
& \hskip 40pt \times \alpha(x^{[1]}_{(n+1)}, x^{[2]}_{(n+1)}) \cdots
\alpha(x^{[1]}_{(2n-2)} \, x^{[2]}_{(2n-2)} \cdots x^{[n-2]}_{(6)}, x^{[n-1]}_{(4)}) \, \times \\
& \hskip 40pt \times 
\alpha(x^{[1]}_{(2n-1)} \, x^{[2]}_{(2n-1)} \cdots x^{[n-1]}_{(5)}, x^{[n]}_{(3)}) \, .
\end{split}
\end{align*}
\end{lemma}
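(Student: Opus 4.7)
The plan is to induct on $n$, with the $n=2$ case serving as the substantive calculation and all higher cases reducing to it by splitting off the rightmost factor~$x^{[n]}$.

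For the base case, the claim reads
\[
t_{xy} = \sum_{(x),(y)} \sigma^{-1}(x_{(1)}, y_{(1)}) \, t_{x_{(2)}} \, t_{y_{(2)}} \, \alpha(x_{(3)}, y_{(3)}).
\]
I would verify it by substituting the explicit formula \eqref{sigma^{-1}-def} for~$\sigma^{-1}$ into the right-hand side, turning it into
\[
\sum t_{x_{(1)} y_{(1)}} \, \alpha^{-1}(x_{(2)}, y_{(2)}) \, t^{-1}_{x_{(3)}} \, t^{-1}_{y_{(3)}} \, t_{x_{(4)}} \, t_{y_{(4)}} \, \alpha(x_{(5)}, y_{(5)}).
\]
Applying~\eqref{tt} to the pairs $(x_{(3)}, x_{(4)})$ and $(y_{(3)}, y_{(4)})$ collapses the two $t^{-1}t$ blocks to co\"unit factors; the surviving $\alpha^{-1}\alpha$ pair then collapses by the convolution-inverse identity for~$\alpha$, leaving~$t_{xy}$.

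For the inductive step, I would set $a = x^{[1]} \cdots x^{[n-1]}$ and apply the $n=2$ case to the pair $(a, x^{[n]})$:
\[
t_{x^{[1]} \cdots x^{[n]}} = \sum \sigma^{-1}(a_{(1)}, x^{[n]}_{(1)}) \, t_{a_{(2)}} \, t_{x^{[n]}_{(2)}} \, \alpha(a_{(3)}, x^{[n]}_{(3)}).
\]
Because $\Delta$ is an algebra morphism, $a_{(i)} = x^{[1]}_{(i)} \cdots x^{[n-1]}_{(i)}$. I would then invoke the induction hypothesis on the factor $t_{a_{(2)}} = t_{x^{[1]}_{(2)} \cdots x^{[n-1]}_{(2)}}$, which introduces $n-2$ additional $\sigma^{-1}$ factors, the remaining basic $t$-factors, and $n-3$ additional $\alpha$ factors, all carried by symbols of the form $x^{[i]}_{(2)(j)}$. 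Coassociativity then lets me relabel these as $x^{[i]}_{(j+1)}$ in a larger iterated coproduct, matching the displayed formula.

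The main obstacle is purely notational: matching the Sweedler-index pattern produced by the induction to the one in the statement. The dictionary is that (i) the outer $\sigma^{-1}(a_{(1)}, x^{[n]}_{(1)})$ supplies the $j=1$ factor of the claimed product, while the $(j-1)$-th factor of the inductive expansion supplies the $j$-th factor for $j \geq 2$; (ii) the outer $\alpha(a_{(3)}, x^{[n]}_{(3)})$ becomes the last $\alpha$ factor of the formula, with the inductive $\alpha$'s filling in the rest; and (iii) the index $n$ on the leading $t$-factor $t_{x^{[1]}_{(n)}}$ is one less than a naive linear count would predict, because the induction terminates at $n=2$ rather than at a trivial $n=1$ split. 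It is instructive to carry out the $n=3$ case by hand first; once the indexing is understood there, the passage from $n-1$ to $n$ is entirely routine.
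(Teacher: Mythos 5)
Your proof is correct, and its substance coincides with the paper's: the same induction on $n$, with the same base case $n=2$ verified by substituting \eqref{sigma^{-1}-def}, cancelling the $t^{-1}t$ pairs via \eqref{tt}, and then cancelling the $\alpha^{-1}\alpha$ pair. The only real difference is the bracketing in the inductive step. You write $x^{[1]}\cdots x^{[n]} = \bigl(x^{[1]}\cdots x^{[n-1]}\bigr)\,x^{[n]}$, apply the $n=2$ identity at the outer level, and then feed the induction hypothesis into the inner factor $t_{a_{(2)}}$; the paper instead writes $x^{[1]}\cdots x^{[n+1]} = \bigl(x^{[1]}x^{[2]}\bigr)\,x^{[3]}\cdots x^{[n+1]}$, applies the induction hypothesis at the outer level to an $n$-tuple whose first entry is the product $x^{[1]}x^{[2]}$, and then expands the inner factor $t_{x^{[1]}_{(n)}x^{[2]}_{(n)}}$ by the $n=2$ identity. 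Both reductions rest on exactly the same two facts -- the $n=2$ formula holds for arbitrary elements of $H$ (not just generators), and $\Delta$ is an algebra morphism -- and both require the same coassociativity relabelling of Sweedler indices; your dictionary (the outer $\sigma^{-1}$ supplies the first $\sigma^{-1}$ factor, the outer $\alpha$ the last $\alpha$ factor, and the inner indices shift by one) checks out against the displayed formula, e.g.\ it correctly reproduces the indices $x^{[1]}_{(2n-1)}$ and $x^{[n-1]}_{(5)}$ in the final $\alpha$. Neither version buys anything over the other; this is a mirror-image of the paper's argument rather than a genuinely new one.
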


\pf
We prove the formula by induction on~$n$.
When $n=2$ it reduces to
\begin{equation}\label{txty}
t_{xy} =
\sum_{(x), (y)}\, \sigma^{-1}(x_{(1)},y_{(1)}) \, 
t_{x_{(2)}}\, t_{y_{(2)}} \,  \alpha(x_{(3)},y_{(3)}) 
\end{equation}
for $x,y \in H$. Let us first prove~\eqref{txty}.
By~\eqref{sigma^{-1}-def} the right-hand side of~\eqref{txty} is equal to
\begin{eqnarray*}
&& \sum_{(x), (y)}\, t_{x_{(1)} y_{(1)}} \, 
\alpha^{-1}(x_{(2)},y_{(2)}) \, t^{-1}_{x_{(3)}} 
\underbrace{t^{-1}_{y_{(3)}} \, t_{y_{(4)}} } \, t_{x_{(4)}}\,  \alpha(x_{(5)},y_{(5)}) \\
& = & \sum_{(x), (y)}\, t_{x_{(1)} y_{(1)}} \, 
\alpha^{-1}(x_{(2)},y_{(2)}) \, 
\underbrace{t^{-1}_{x_{(3)}} \, t_{x_{(4)}} }\,  \alpha(x_{(5)},y_{(3)}) \\
& = & \sum_{(x), (y)}\, t_{x_{(1)} y_{(1)}} \, 
\underbrace{\alpha^{-1}(x_{(2)},y_{(2)}) \, \,  \alpha(x_{(3)},y_{(3)}) } \\
& = & \sum_{(x), (y)}\, t_{x_{(1)} y_{(1)}} \, 
\eps(x_{(2)}) \, \eps(y_{(2)}) \\
& = & \sum_{(x), (y)}\, t_{x_{(1)} y_{(1)}} \, 
\eps(x_{(2)} y_{(2)}) 
= t_{xy} \, .
\end{eqnarray*}

Let us assume that Lemma~\ref{txtytz} holds for all $n$-tuples of~$H$
and consider a sequence $(x^{[1]}, x^{[2]}, \ldots, x^{[n+1]})$ of $n+1$ elements  of~$H$.
By the induction hypothesis and by~\eqref{txty}, 
$t_{x^{[1]}x^{[2]} \cdots x^{[n+1]}} = t_{(x^{[1]}x^{[2]}) \cdots x^{[n+1]}}$ is equal to
\begin{align*}
&  \sum_{x^{[1]}, x^{[2]}, \ldots, x^{[n+1]}}\, 
\sigma^{-1} \bigl( (x^{[1]}_{(1)} x^{[2]}_{(1)}) \cdots x^{[n]}_{(1)}, x^{[n+1]}_{(1)} \bigr) \, \times \\
& \hskip 40pt \times
\sigma^{-1}( \bigl( x^{[1]}_{(2)} x^{[2]}_{(2)}) \cdots x^{[n-1]}_{(2)}, x^{[n]}_{(2)} \bigr) 
\cdots \sigma^{-1}(x^{[1]}_{(n-1)} x^{[2]}_{(n-1)}, x^{[3]}_{(n-1)}) \, \times \\
& \hskip 40pt \times t_{x^{[1]}_{(n)} x^{[2]}_{(n)}} \, t_{x^{[3]}_{(n)}} \cdots  
t_{x^{[n]}_{(3)}} \, t_{x^{[n+1]}_{(2)}} \, \times  \\
& \hskip 40pt \times \alpha(x^{[1]}_{(n+1)} x^{[2]}_{(n+1)}, x^{[3]}_{(n+1)}) \cdots \\
& \hskip 40pt \cdots
\alpha \bigl( (x^{[1]}_{(2n-2)} \, x^{[2]}_{(2n-2)})\, x^{[3]}_{(2n-2)} \cdots 
x^{[n-1]}_{(6)}, x^{[n]}_{(4)} \bigr) \, \times \\
& \hskip 40pt \times 
\alpha \bigl( (x^{[1]}_{(2n-1)} \, x^{[2]}_{(2n-1)}) \, x^{[3]}_{(2n-1)} 
\cdots x^{[n]}_{(5)}, x^{[n+1]}_{(3)} \bigr)  \displaybreak[1]\\
& =  \sum_{x^{[1]}, x^{[2]}, \ldots, x^{[n+1]}}\, 
\sigma^{-1}(x^{[1]}_{(1)} x^{[2]}_{(1)} \cdots x^{[n]}_{(1)}, x^{[n+1]}_{(1)}) \,
\sigma^{-1}(x^{[1]}_{(2)} x^{[2]}_{(2)} \cdots x^{[n-1]}_{(2)}, x^{[n]}_{(2)}) \cdots \\
& \hskip 40pt \cdots \sigma^{-1}(x^{[1]}_{(n-1)} x^{[2]}_{(n-1)}, x^{[3]}_{(n-1)})
\, \sigma^{-1}(x^{[1]}_{(n)} , x^{[2]}_{(n)}) \, \times \\
& \hskip 40pt \times t_{x^{[1]}_{(n+1)}} \, t_{x^{[2]}_{(n+1)}} \, t_{x^{[3]}_{(n)}} \cdots  
t_{x^{[n]}_{(3)}} \, t_{x^{[n+1]}_{(2)}} \, \times  \\
& \hskip 40pt \times 
\alpha(x^{[1]}_{(n+2)} , x^{[2]}_{(n+2)})  \, 
\alpha(x^{[1]}_{(n+3)} x^{[2]}_{(n+3)}, x^{[3]}_{(n+1)}) \cdots \\
& \hskip 40pt 
\cdots \alpha(x^{[1]}_{(2n)} \, x^{[2]}_{(2n)}\, x^{[3]}_{(2n-2)} \cdots 
x^{[n-1]}_{(6)}, x^{[n]}_{(4)}) \, \times \\
& \hskip 40pt \times 
\alpha(x^{[1]}_{(2n+1)} \, x^{[2]}_{(2n+1)} \, x^{[3]}_{(2n-1)} \cdots x^{[n]}_{(5)}, x^{[n+1]}_{(3)}) \, ,
\end{align*}
which is the desired formula for $n+1$ elements.
\epf

\begin{proof}[Proof of Theorem~\ref{integralthm}]
Let $A$ be the integral closure of~$ \BB_H^{\alpha}$ in~$\Frac S(t_H)$.
To prove the theorem it suffices to
establish that each generator~$t_z$ of $S(t_H)$ belongs to~$A$.

We start with the unit of~$H$.
By~\cite[Lemma~5.1]{AK}, $t_1 = \sigma(1,1)$. Thus $t_1$
belongs to~$\BB_H^{\alpha}$, hence to~$A$.

Let $g$ be a grouplike element of the generating set~$\Sigma$. 
By hypothesis, there is an integer $n\geq 2$ such that $g^n = 1$.
We apply Lemma~\ref{txtytz} to $x^{[1]} = \cdots = x^{[n]} = g$.
Since any iterated coproduct $\Delta^{(p)}$ applied to~$g$ yields
\begin{equation}\label{gitere}
\Delta^{(p)}(g) = g\otimes g \otimes \cdots \otimes g\, ,
\end{equation}
where the right-hand side is the tensor product of $p$~copies of~$g$,
we obtain
\begin{multline}\label{ggg}
t_{g^n} = 
\sigma^{-1}(g^{n-1}, g) \, \sigma^{-1}(g^{n-2}, g)
\cdots \sigma^{-1}(g, g) \, t_g^n \times \\
\times \alpha(g, g) \cdots \alpha(g^{n-2}, g) \, \alpha(g^{n-1},g) \, .
\end{multline}
Since the values of an invertible cocycle on grouplike elements are invertible
elements,  since $t_{g^n} = t_1$, and since $\sigma^{-1}(g, h) = 1/\sigma(g, h)$ 
for all grouplike elements $g,h$, Formula~\eqref{ggg} implies
\[
t_g^n = t_1 \, 
\frac{\sigma(g^{n-1}, g) \, \sigma(g^{n-2}, g)\cdots \sigma(g, g)}
{\alpha(g, g) \cdots \alpha(g^{n-2}, g) \, \alpha(g^{n-1},g)} \, .
\]
The right-hand side belongs to~$\BB_H^{\alpha}$.
It follows that $t_g$ is in~$A$ for each grouplike element of~$\Sigma$.

Since the grouplike elements of~$\Sigma$ are of finite order
and generate the group of grouplike elements of~$H$,
any grouplike element~$g$ of~$H$ is a product $g = g^{[1]} \cdots g^{[n]}$
of grouplike elements of~$\Sigma$
for which we have just established that $t_{g^{[1]}}, \ldots , t_{g^{[n]}}$
belong to~$A$. It then follows from Lemma~\ref{txtytz} and~\eqref{gitere}
that 
\begin{equation*}
t_{g^{[1]} \cdots g^{[n]}} 
= \kappa(g^{[1]}, \ldots, g^{[n]}) \,
t_{g^{[1]}} \, t_{g^{[2]}} \cdots  t_{g^{[n-1]}} \, t_{g^{[n]}} \, ,
\end{equation*}
where $\kappa(g^{[1]}, \ldots, g^{[n]})$ is the invertible element of~$\BB_H^{\alpha}$
given by
\begin{multline*}
\kappa(g^{[1]}, \ldots, g^{[n]})\\
= \frac{\alpha(g^{[1]}, g^{[2]}) \cdots\alpha(g^{[1]} \, g^{[2]} \cdots g^{[n-2]}, g^{[n-1]}) \, 
\alpha(g^{[1]} \, g^{[2]} \cdots g^{[n-1]}, g^{[n]})}{\sigma(g^{[1]} \, g^{[2]}\cdots g^{[n-1]}, g^{[n]}) \,
\sigma(g^{[1]} \, g^{[2]} \cdots g^{[n-2]}, g^{[n-1]}) \cdots \sigma(g^{[1]} , g^{[2]})} \, .
\end{multline*}
Therefore, $t_g \in A$ for every grouplike element of~$H$.

We next show that $t_x$ belongs to~$A$ for every skew-primitive element~$x$ of~$\Sigma$.
It is easy to check that if $x$ satisfies~\eqref{skew},
then for all $p\geq 2$, 
\begin{equation}\label{xitere}
\Delta^{(p)}(x) = g^{\otimes p} \otimes x +
\sum_{i=1}^{p-1} \, g^{\otimes (p-i)} \otimes x \otimes h^{\otimes i}  
+ x \otimes h^{\otimes p}  \, .
\end{equation}
Thus the iterated coproduct of any skew-primitive element~$x$
is a sum of tensor product of elements, all of which are grouplike, 
except for exactly one, which is~$x$.
It then follows from Lemma~\ref{txtytz} and~\eqref{xitere} that
for each~$n\geq 1$ the element~$t_{x^n}$ 
is a linear combination with coefficients in~$\BB_H^{\alpha}$
of monomials of the form $t_{g_1} t_{g_2} \cdots t_{g_{n-p}} t_x^p$,
where $0 \leq p \leq n$ and $g_1, \ldots, g_{n-p}$ are grouplike elements.
It is easily checked that in this linear combination
there is a unique monomial of the form $t_x^n$
whose coefficient is the invertible element of~$\BB_H^{\alpha}$
\[
\sigma^{-1}(g^{n-1}, g) \, \sigma^{-1}(g^{n-2}, g)\cdots \sigma^{-1}(g, g) \, 
\alpha(h, h) \cdots \alpha(h^{n-2}, h) \, \alpha(h^{n-1},h)\, .
\]
Since $t_g$ belongs to~$A$ for any grouplike element~$g\in H$,
it follows that, for all $n\geq 1$, the element~$t_{x^n}$
is a polynomial of degree~$n$ in~$t_x$ with coefficients in~$A$.
By hypothesis, there are scalars $\lambda_1, \ldots, \lambda_{n-1}, \lambda_n \in k$
for some positive integer~$n$ such that
\[
x^n + \lambda_1 x^{n-1} + \cdots + \lambda_{n-1} x + \lambda_n = 0 \, .
\]
Therefore, $t_x$ satisfies a degree~$n$ polynomial equation
with coefficients in the integral closure~$A$ and with highest-degree coefficient
equal to~$1$. 
This proves that $t_x \in A$.

To complete the proof, it suffices to check that
$t_z$ belongs to~$A$ for any product~$z$ of grouplike
or skew-primitive elements $x^{[1]}, \ldots , x^{[n]}$ 
such that $t_{x^{[1]}}, \ldots, t_{x^{[n]}}$ belong to~$A$.
It follows from Lemma~\ref{txtytz},~\eqref{gitere}, and~\eqref{xitere} that
$t_z$ is a linear combination with coefficients in~$\BB_H^{\alpha}$ 
of products of the variables $t_{x^{[1]}}, \ldots, t_{x^{[n]}}$ 
and of variables of the form~$t_g$, where $g$ is grouplike. 
Since these monomials belong to~$A$, so does~$t_z$.
\end{proof}

\section{How to construct elements of~$\BB_H^{\alpha}$}\label{identities}

In the example considered in Section~\ref{Sweedler} 
we reformulated the values of the generic
cocycle in terms of certain rational fractions $E$, $R$, $S$, $T$, $U$.
The aim of this last section is to explain how we found these fractions
by presenting a general systematic way 
of producing elements of~$\BB_H^{\alpha}$ for an arbitrary Hopf algebra.
To this end we introduce a new set of symbols.

\subsection{The symbols~$X_x$}\label{T(XH)}

Let $H$ be a  Hopf algebra and
$X_H$ a copy of the underlying vector space of~$H$;
we denote the identity map from $H$ to~$X_H$ by $x\mapsto X_x$ 
for all $x\in H$.

Consider the tensor algebra $T(X_H)$ of the vector space~$X_H$
over the ground field~$k$:
\[
T(X_H) = \bigoplus_{r\geq 0}\, X_H^{\otimes r} \, .
\]
If $\{x_i\}_{i\in I}$ is a basis of~$H$, 
then $T(X_H)$ is the free noncommutative algebra
over the set of indeterminates $\{X_{x_i}\}_{i\in I}$.

The algebra $T(X_H)$ is an $H$-comodule algebra 
equipped with the coaction $\delta : T(X_H) \to T(X_H) \otimes H$ given 
for all $x\in H$ by
\begin{equation}\label{TXcoaction2}
\delta(X_x) = \sum_{(x)}\, X_{x_{(1)}} \otimes x_{(2)} \, .
\end{equation}

\subsection{Co\"\i nvariant elements of~$T(X_H)$}

Let us now present a general method to construct co\"invariant elements of~$T(X_H)$.
We need the following terminology.

Given an integer $n\geq 1$, an \emph{ordered partition} of~$\{1, \ldots, n\}$
is a partition $\underline{I} = (I_1, \ldots, I_r)$ of~$\{1, \ldots, n\}$
into disjoint nonempty subsets $I_1, \ldots, I_r$ such that
$i < j$ for all $i \in I_k$ and $j\in I_{k+1}$ ($1 \leq k \leq r-1$).

If $x[1], \ldots, x[n]$ are $n$ elements of~$H$ and if
$I = \{i_1 < \cdots < i_p\}$ is a subset of~$\{1, \ldots, n\}$,
we set
$x[I] = x[i_1] \cdots x[i_p] \in H$.
If $\underline{I} = (I_1, \ldots, I_r)$ is an ordered partition of~$\{1, \ldots, n\}$,
then clearly
$x[I_1] \cdots x[I_r] = x[1] \cdots x[n]$.

Now let $x[1], \ldots, x[n]$ be $n$~elements of~$H$ and 
$\underline{I} = (I_1, \ldots, I_r)$, $\underline{J} = (J_1, \ldots, J_s)$
be ordered partitions of~$\{1, \ldots, n\}$.
We consider the following element of~$T(X_H)$:
\begin{equation}\label{P0}
P_{x[1], \ldots, x[n]; \underline{I}, \underline{J}} = 
\sum_{(x[1]), \ldots, (x[n])}\,
X_{x[I_1]_{(1)}} \cdots X_{x[I_r]_{(1)}} X_{S(x[J_s]_{(2)})} \cdots X_{S(x[J_1]_{(2)})}
\, .
\end{equation}
The element $P_{x[1], \ldots, x[n]; \underline{I}, \underline{J}}$ is 
an homogeneous element of~$T(X_H)$ of degree $r+s$.
Observe that $P_{x[1], \ldots, x[n]; \underline{I}, \underline{J}}$
is linear in each variable $x[1], \ldots, x[n]$.

We have the following generalization of~\cite[Lemma~2.1]{AK}.

\begin{Prop}\label{identity0}
Each element $P_{x[1], \ldots, x[n]; \underline{I}, \underline{J}}$
of~$T(X_H)$ is co\"invariant.
\end{Prop}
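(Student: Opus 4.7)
The plan is to compute $\delta(P)$ directly and show it equals $P \otimes 1_H$, where $P = P_{x[1], \ldots, x[n]; \underline{I}, \underline{J}}$. Since $\delta$ is an algebra morphism, I would apply it to each factor: each $X_{x[I_k]_{(1)}}$ contributes $X_{x[I_k]_{(1)(1)}} \otimes x[I_k]_{(1)(2)}$, and each $X_{S(x[J_l]_{(2)})}$, after the anti-coalgebra identity $\Delta(S(y)) = \sum S(y_{(2)}) \otimes S(y_{(1)})$, contributes $X_{S(x[J_l]_{(2)(2)})} \otimes S(x[J_l]_{(2)(1)})$. Each $x[i]$ has thus been split four times, and coassociativity lets me relabel these splits as a single four-fold iterated coproduct $\Delta^{(3)}(x[i])$ with slots I shall denote $a_i, b_i, c_i, d_i$: the $I$-side $X$-variables involve the $a_i$, the $J$-side ones the $d_i$ (inside~$S$), while the $H$-factor of $\delta(P)$ assembles the $b_i$ (from $I$) and the $c_i$ (from $J$, inside~$S$).

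To identify the $H$-factor, set $w = x[1] \cdots x[n]$. Since $\underline{I}$ and $\underline{J}$ are \emph{ordered} partitions of $\{1, \ldots, n\}$, we have $x[I_1] \cdots x[I_r] = x[J_1] \cdots x[J_s] = w$, so the algebra-morphism property of $\Delta$ gives $\prod_{i=1}^{n} b_i = w^{(b)}$, the second slot of $\Delta^{(3)}(w)$. The $J$-side $X$-variables are written in \emph{reverse} order of~$l$, so the anti-multiplicativity of $S$ converts their $H$-contributions into a single $S$ of a product: $S(x[J_s]_{(2)(1)}) \cdots S(x[J_1]_{(2)(1)}) = S(\prod_{i=1}^{n} c_i) = S(w^{(c)})$. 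Hence the $H$-factor of $\delta(P)$ equals $w^{(b)} \, S(w^{(c)})$; by coassociativity this is $y_{(1)} S(y_{(2)})$ with $y$ the middle slot of $\Delta^{(2)}(w)$, and the antipode identity collapses it to $\eps(y) \, 1_H$.

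Finally, I would contract this $\eps(y)$ factor into the $T(X_H)$-part. Since $\Delta^{(2)}$ is an algebra morphism, $y = \prod_i y_i$ where $y_i$ is the middle slot of $\Delta^{(2)}(x[i])$, and $\eps$ being multiplicative gives $\eps(y) = \prod_i \eps(y_i)$. Applying the counit identity $(\id_H \otimes \eps \otimes \id_H) \circ \Delta^{(2)} = \Delta$ to each $x[i]$ individually collapses the four-fold split $(a_i, b_i, c_i, d_i)$ back to the original two-fold split, with $a_i = x[i]_{(1)}$ and $d_i = x[i]_{(2)}$, so that the $T(X_H)$-part of $\delta(P)$ becomes $P$ exactly. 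This yields $\delta(P) = P \otimes 1_H$. The one substantive obstacle is purely combinatorial bookkeeping: tracking two independent Sweedler splits (one from the $I$-partition, one from the $J$-partition), merging them into a single four-fold iterated coproduct via coassociativity, and verifying that it is precisely the middle two slots that the antipode identity consumes. Conceptually the argument is the direct generalisation of the case $n=1$, $\underline{I} = \underline{J} = (\{1\})$, which is Lemma~2.1 of~\cite{AK}.
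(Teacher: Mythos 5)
Your proposal is correct and follows essentially the same route as the paper's proof: apply $\delta$ factorwise, use the anti-(co)multiplicativity of $S$ and the orderedness of $\underline{I}$ and $\underline{J}$ to write the $H$-leg of $\delta(P)$ as $x[1]_{(2)}\cdots x[n]_{(2)}\,S(x[n]_{(3)})\cdots S(x[1]_{(3)})$, then collapse it with the antipode and counit identities. Your ``global'' packaging via $w$ and its middle slot $y=\prod_i y_i$ is legitimate because the inner splitting of the middle legs is independent of the outer legs sitting in the $X$-variables; the paper performs the same collapse by (implicitly) telescoping the palindromic product from the inside out.
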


\pf
By \eqref{TXcoaction2}, 
$\delta(P_{x[1], \ldots, x[n]; \underline{I}, \underline{J}})$ is equal to
\begin{multline*}
\sum_{(x[1]), \ldots, (x[n])}\,
X_{x[I_1]_{(1)}} \cdots X_{x[I_r]_{(1)}} 
X_{S(x[J_s]_{(4)})} \cdots X_{S(x[J_1]_{(4)})} \\
\otimes  \; x[I_1]_{(2)} \cdots x[I_r]_{(2)} \, S(x[J_s]_{(3)}) \cdots S(x[J_1]_{(3)})
\end{multline*}
\begin{multline*}
=  \sum_{(x[1]), \ldots, (x[n])}\,
X_{x[I_1]_{(1)}} \cdots X_{x[I_r]_{(1)}}  X_{S(x[J_s]_{(4)})} \cdots X_{S(x[J_1]_{(4)})} \\
\otimes  x[1]_{(2)} \cdots x[n]_{(2)} \, S(x[n]_{(3)}) \cdots S(x[1]_{(3)}) 
\end{multline*}
\begin{multline*}
= \sum_{(x[1]), \ldots, (x[n])}\,
X_{x[I_1]_{(1)}} \cdots X_{x[I_r]_{(1)}}  X_{S(x[J_s]_{(3)})} \cdots X_{S(x[J_1]_{(3)})}  \\
\otimes  \eps(x[1]_{(2)}) \cdots \eps(x[n]_{(2)}) 
\end{multline*}
\[
=  \sum_{(x[1]), \ldots, (x[n])}\,
X_{x[I_1]_{(1)}} \cdots X_{x[I_r]_{(1)}}  X_{S(x[J_s]_{(2)})} \cdots X_{S(x[J_1]_{(2)})}
\otimes 1  \, .
\]
Therefore, $\delta(P_{x[1], \ldots, x[n]; \underline{I}, \underline{J}})
 = P_{x[1], \ldots, x[n]; \underline{I}, \underline{J}}  \otimes  1$
 and the conclusion follows.
\epf

As special cases of the previous proposition,
the following elements of $T(X_H)$ are co\"invariant
for all $x,y \in H$:
\begin{equation}\label{P1}
P_x = P_{x; (\{1\}),  (\{1\}) } = \sum_{(x)}\, X_{x_{(1)}} \, X_{S(x_{(2)})} 
\end{equation}
and
\begin{equation}\label{P2}
P_{x,y} = P_{x,y; (\{1\}, \{2\}),  (\{1,2\}) } = \sum_{(x), (y)}\,
X_{x_{(1)}} \, X_{y_{(1)}} \, X_{S(x_{(2)} y_{(2)})} \, . 
\end{equation}

\subsection{The generic evaluation map}\label{univ-eval-map}

As in Section~\ref{generic cocycle}, let $H$ be a Hopf algebra,
$\alpha : H \times H \to k$ a normalized invertible cocycle,
and ${}^{\alpha} H$ the corresponding twisted algebra.

Consider the algebra morphism
$\mu_{\alpha} : T(X_H) \to  S(t_H) \otimes {}^{\alpha} H$
defined for all $x\in H$ by
\begin{equation*}
\mu_{\alpha}(X_x) =  \sum_{(x)}\, t_{x_{(1)}} \otimes u_{x_{(2)}} \, .
\end{equation*}

The morphism $\mu_{\alpha}$ possesses the following properties
(see \cite[Sect.~4]{AK}).

\begin{Prop}\label{lem-mu-univ}
(a) The morphism $\mu_{\alpha} : T(X_H) \to  S(t_H) \otimes {}^{\alpha} H$ is an
$H$-comodule algebra morphism.

(b) If the ground field $k$ is infinite, then for every $H$-comodule algebra morphism
$\mu : T(X_H) \to {}^{\alpha} H$, 
there is a unique algebra morphism 
$\chi : S(t_H) \to k$
such that 
\[
\mu = (\chi \otimes \id) \circ \mu_{\alpha} \, .
\]
\end{Prop}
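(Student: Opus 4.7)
The plan is to handle parts (a) and (b) separately: (a) is a routine verification on generators, while (b) carries the real content. For (a), since $T(X_H)$ is free as an algebra on the vector space $X_H$, the formula defining $\mu_\alpha$ on the generators $X_x$ extends automatically to a unique algebra morphism into $S(t_H) \otimes {}^{\alpha}H$. It remains to verify comodule compatibility on each generator: applying the codomain coaction $\id \otimes \delta_{{}^{\alpha}H}$ (with $\delta_{{}^{\alpha}H}(u_y) = \sum_{(y)} u_{y\sw1} \otimes y\sw2$) to $\mu_\alpha(X_x)$ and comparing with $(\mu_\alpha \otimes \id) \circ \delta(X_x)$, both sides collapse by coassociativity to $\sum_{(x)} t_{x\sw1} \otimes u_{x\sw2} \otimes x_{(3)}$.

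For part (b), the key construction is the linear bijection $\phi : {}^{\alpha}H \to H$ defined by $\phi(u_y) = y$, which is an isomorphism of right $H$-comodules but not of algebras. A one-line Sweedler calculation yields the pivotal identity
\[
(\eps \circ \phi \otimes \id_H) \circ \delta_{{}^{\alpha}H} = \phi.
\]
Define $\nu : H \to k$ by $\nu(x) = \eps(\phi(\mu(X_x)))$. Since $x \mapsto X_x$, $\mu$, $\phi$, and $\eps$ are all $k$-linear, $\nu$ is a linear form on $H$, so it extends uniquely to an algebra morphism $\chi : S(t_H) \to k$ with $\chi(t_x) = \nu(x)$. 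To verify $\mu = (\chi \otimes \id) \circ \mu_\alpha$, apply $(\eps \circ \phi) \otimes \id_H$ to the $H$-comodule identity
\[
\delta_{{}^{\alpha}H}(\mu(X_x)) = \sum_{(x)} \mu(X_{x\sw1}) \otimes x\sw2
\]
and invoke the pivotal identity to obtain $\phi(\mu(X_x)) = \sum_{(x)} \nu(x\sw1)\, x\sw2$; applying $\phi^{-1}$ on the left recovers $(\chi \otimes \id)(\mu_\alpha(X_x))$ exactly. Since $\mu$ and $(\chi \otimes \id) \circ \mu_\alpha$ are algebra morphisms from the free algebra $T(X_H)$ agreeing on the generating set $\{X_x\}$, they agree everywhere. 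Uniqueness follows because the same equation evaluated on each generator forces $\chi(t_x) = \eps(\phi(\mu(X_x)))$.

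The main obstacle is really part (b), and it lies in recognizing that ${}^{\alpha}H$ and $H$ share the same coalgebra and right $H$-comodule structure, so the counit of $H$ transported across $\phi$ serves as an ``augmentation'' on ${}^{\alpha}H$ that inverts the defining relation $\mu(X_x) = \sum_{(x)} \chi(t_{x\sw1})\, u_{x\sw2}$ for $\chi(t_x)$. Once that observation is in place, the remainder is routine Sweedler-calculus bookkeeping with $\Delta^{(2)}$ and the counit axiom. I note that the hypothesis that $k$ is infinite does not visibly enter the argument as sketched; presumably it plays a role in the polynomial-identity framework adopted in~\cite{AK} rather than in this direct universal-property formulation.
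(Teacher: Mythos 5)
Your proof is correct and takes essentially the route of the source: the paper gives no proof here, deferring to \cite[Sect.~4]{AK}, and the argument there rests on exactly the observation you isolate, namely that ${}^{\alpha}H$ is the regular right $H$-comodule in disguise, so the restriction of $\mu$ to $X_H\cong H$ must be the convolution $\nu * \id$ for the linear form $\nu = \eps\circ\phi\circ\mu|_{X_H}$, which then extends to the character $\chi$ by the universal property of the symmetric algebra $S(t_H)$. Your closing remark is also accurate: the hypothesis that $k$ be infinite plays no role in this argument and is not needed for the statement as given; it is a standing assumption inherited from the polynomial-identity framework of \cite{AK}, where it is used for neighbouring statements (e.g.\ to identify $\Ker\mu_{\alpha}$ with the ideal of $H$-identities of ${}^{\alpha}H$, which requires that a polynomial vanishing at every point of $k^n$ be zero).
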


In other words, any $H$-comodule algebra morphism
$\mu : T(X_H) \to {}^{\alpha} H$ is obtained by specialization
from~$\mu_{\alpha}$.
For this reason we call $\mu_{\alpha}$ the 
\emph{generic evaluation map} for~${}^{\alpha} H$.

Now we have the following result (see~\cite[Sect.~8]{AK}).

\begin{Prop}
If $P \in T(X_H)$ is co\"invariant,
then $\mu_{\alpha}(P)$ belongs to~$\BB_H^{\alpha}$.
\end{Prop}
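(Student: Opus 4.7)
The strategy is to factor $\mu_\alpha$ through the generic Galois extension $\AA_H^\alpha = \BB_H^\alpha \otimes {}^\sigma H$ and exploit that the coinvariants of~$\AA_H^\alpha$ form the subalgebra~$\BB_H^\alpha$, as recorded in~\S\ref{algebraA}.

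I would first introduce the auxiliary algebra morphism $\tilde\mu : T(X_H) \to \AA_H^\alpha$ determined on generators by $\tilde\mu(X_x) = 1 \otimes u_x$; this is well defined because $T(X_H)$ is the free algebra on the~$X_x$. Comparing the coaction~\eqref{TXcoaction2} on~$T(X_H)$ with the coaction $\id_{\BB_H^\alpha}\otimes \Delta$ on~$\AA_H^\alpha$ shows immediately that $\tilde\mu$ is a morphism of $H$-comodule algebras.

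Next, because the generic cocycle~$\sigma$ is cohomologous to~$\alpha$ via the convolution-invertible linear map $x \mapsto t_x$, the isomorphism theorem for cohomologous cocycles recalled in~\S\ref{Isomorphisms} supplies an $H$-comodule algebra isomorphism
\[
\Psi : \Frac S(t_H) \otimes {}^\sigma H \longrightarrow \Frac S(t_H) \otimes {}^\alpha H
\]
defined on generators by $\Psi(1 \otimes u_x) = \sum_{(x)} t_{x_{(1)}} \otimes u_{x_{(2)}}$. A direct Sweedler-sigma computation, combining~\eqref{sigma-def} with the defining identity~\eqref{tt} of~$t^{-1}_x$, checks that $\Psi$ preserves products; comparing on the generators~$X_x$ then yields $\Psi\circ\tilde\mu = \mu_\alpha$, where $\mu_\alpha$ is viewed inside $\Frac S(t_H) \otimes {}^\alpha H$ via the canonical inclusion of $S(t_H) \otimes {}^\alpha H$.

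Given a coinvariant $P \in T(X_H)$, the image $\tilde\mu(P)$ is coinvariant in~$\AA_H^\alpha$, hence of the form $g \otimes u_1$ with $g \in \BB_H^\alpha$ by~\S\ref{algebraA}. Applying~$\Psi$ and using $\Psi(1\otimes u_1) = t_1 \otimes u_1$ yields $\mu_\alpha(P) = g\, t_1 \otimes u_1$; since $t_1 = \sigma(1,1)$ belongs to~$\BB_H^\alpha$, we conclude $\mu_\alpha(P) \in \BB_H^\alpha$ after the usual identification of the coinvariants of $S(t_H) \otimes {}^\alpha H$ with a subset of~$S(t_H) \subset \Frac S(t_H)$. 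The main technical obstacle is verifying that~$\Psi$ is well defined and multiplicative; once this cocycle-twist calculation is in place, everything else is bookkeeping.
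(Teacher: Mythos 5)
Your proof is correct and follows essentially the route the paper itself takes (the statement is quoted from \cite[Sect.~8]{AK}, and the argument there is the same factorization): one sends $X_x$ to $1\otimes u_x$ in $\AA_H^{\alpha}=\BB_H^{\alpha}\otimes{}^{\sigma}H$, uses that the co\"invariants of a twisted product $B\otimes{}^{\gamma}H$ are exactly $B\otimes u_1$, and transports the result back to $S(t_H)\otimes{}^{\alpha}H$ via the comodule algebra isomorphism attached to the coboundary $x\mapsto t_x$, with $t_1=\sigma(1,1)\in\BB_H^{\alpha}$ absorbing the last factor. The only piece you leave implicit, the multiplicativity of $\Psi$, is the standard computation for cohomologous cocycles from \cite{BCM, Doi} and goes through verbatim over the base $\Frac S(t_H)$, so no gap remains.
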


It follows that the image 
$\mu_{\alpha}(P_{x[1], \ldots, x[n]; \underline{I}, \underline{J}})$
of all co\"invariant elements defined by~\eqref{P0} belong to~$\BB_H^{\alpha}$.
This provides a systematic way to produce elements of~$\BB_H^{\alpha}$.

\begin{example}
When $H= H_4$ is the Sweedler algebra, 
it is easy to check that the elements 
$R$, $S$, $T$, $U$ of~\eqref{ERSTU} are 
obtained in this way: we have
\[
R = \mu_{\alpha}(P_x) \, , \quad
T = \mu_{\alpha}(P_{y - z}) \, , \quad
U = \mu_{\alpha}(P_{x,z}) \, , \quad 
ES = \mu_{\alpha}(P_{y,y}) \, ,
\]
where $\{1,x,y,z\}$ is the basis of~$H_4$ defined in Section~\ref{Sweedler}
and $P_x$, $P_{y - z}$, $P_{x,z}$, and~$P_{y,y}$
are special cases of the noncommutative polynomials 
defined by~\eqref{P1} and~\eqref{P2}.
\end{example}

\begin{remark}
In~\cite{AK} we developped a theory of \emph{polynomial identities}
for $H$-comodule algebras. This theory applies in particular
to the twisted algebras~${}^{\alpha} H$. 
We established that the
$H$-identities of~${}^{\alpha} H$, as defined in \emph{loc.\ cit}., 
are exactly the elements of~$T(X_H)$ that lie in the kernel of
the generic evaluation map~$\mu_{\alpha}$.
Thus the $H$-comodule algebra 
\[
\UU_H^{\alpha} = T(X_H)/\Ker \mu_{\alpha}
\]
plays the r\^ole of a \emph{universal comodule algebra}. 
We also constructed an $H$-comodule algebra morphism 
$\UU_H^{\alpha} \to \AA_H^{\alpha}$;
under certain conditions
this map turns the generic Galois extension~$\AA_H^{\alpha}$
into a \emph{central localization} of the universal comodule algebra~$\UU_H^{\alpha}$
(see~\cite[Sect.~9]{AK} for details).
\end{remark}

\bibliographystyle{amsplain}

\end{document}